\renewcommand{\mathcal}{\mathscr}
\numberwithin{equation}{section}
\newtheorem{theorem}{Theorem}[section]
\newtheorem{lemma}[theorem]{Lemma}
\newtheorem{corollary}[theorem]{Corollary}
\newcommand{\R}{\mathbb R}
\newcommand{\N}{\mathbb N}
\renewcommand{\leq}{\leqslant}
\renewcommand{\le}{\leqslant}
\renewcommand{\geq}{\geqslant}
\renewcommand{\ge}{\geqslant}
\renewcommand{\epsilon}{\varepsilon}
\newcommand{\e}{\varepsilon}
\author{Serena Dipierro}
\author{Veronica Felli}
\author{Enrico Valdinoci}
\title[Unique continuation principle]{Unique continuation principles
in cones under nonzero Neumann boundary conditions}\thanks{The authors are member of INdAM/GNAMPA.
S.~Dipierro is supported by
the Australian Research Council DECRA DE180100957
``PDEs, free boundaries and applications'' and the Fulbright Foundation.
S.~Dipierro and E.~Valdinoci are supported by the Australian Research Council
Discovery Project DP170104880 NEW ``Nonlocal Equations at Work''.
V.~Felli is partially supported by the PRIN-2015
grant ``Variational methods, with applications to problems in
mathematical physics and geometry''. This work was started on the occasion
of a very fruitful visit of V.~Felli to the University of Melbourne.}
\keywords{Unique continuation, singular weights, conical geometry,
blow-up limits,
Almgren's frequency formula.}
\subjclass[2010]{35J15, 35J25, 35J75.}
\begin{document}

\maketitle

\begin{abstract}
We consider an elliptic equation in a cone, endowed
with (possibly inhomogeneous) Neumann conditions.
The operator and the forcing terms can also
allow non-Lipschitz singularities at the vertex of the cone.

In this setting, we provide unique continuation results, both
in terms of interior and boundary points.

The proof relies on a suitable Almgren-type frequency formula
with remainders. As a byproduct, we obtain classification results
for blow-up limits.
\end{abstract}

\section{Introduction}

In this article we consider an elliptic equation
with Neumann boundary condition. The domain taken
into consideration is a cone, and the equation and the boundary condition
can be inhomogeneous and be singular at the origin.

The main results that we provide are of unique continuation type.
Roughly speaking, we will show that {\em if a solution vanishes at any order
at the vertex of the cone, then the solution must necessarily vanish
in a neighborhood of the vertex} (and then everywhere, up to suitable assumptions).

The notion of vanishing can be framed both with respect to the convergence
of points coming from the interior of the domain and, under the appropriate
assumptions, with respect to the convergence
of points coming from the boundary.

{F}rom these results, we also obtain classification results for the blow-up limits.
The method of proof will rely on the special geometric structure
of the cone, which is a set invariant under dilations
and in which the normal on the side of the cone is perpendicular to the
radial direction. The main analytic tool in use will be an appropriate
type of frequency function. Differently from the classical case in~\cite{MR574247},
the choice of the frequency function in our case has to comprise additional quantities
and reminders to deal with the forcing terms and possibly compensate for the singular
behaviors near the vertex.
\medskip

The mathematical setting in which we work is the following.
We let~$\Omega
\subseteq\R^n$, with~$n\ge2$, be a cone with vertex at the
origin (namely, we assume that~$x\in\Omega$
if and only if~$tx\in\Omega$ for all~$t>0$).
We consider the spherical cap
\begin{equation}\label{SPHCAP}
\Sigma=\left\{\frac{x}{|x|}:x\in\Omega\right\}\subset{\mathbb S}^{n-1}
\end{equation}
and we assume that $\Sigma$ has~$C^2$ boundary in~${\mathbb S}^{n-1}$.

We also take into account a positive function~$A\in W^{1,1}(\Omega)$
such that 
\begin{equation}\label{MUCK}
c\leq A(x)\leq \frac1c\quad {\mbox{ for some $c>0$ and a.e. $x\in\Omega$.}}
\end{equation}
For every $r>0$ we denote $B_r=\{x\in\R^n:|x|<r\}$.
 We deal with weak solutions of the following partial differential
 equation in a neighbourhood of the vertex of the cone (to fix the
 notations we consider $\Omega\cap B_1$) 
with possibly inhomogeneous Neumann datum:
\begin{equation}\label{MAIN EQ}
\begin{cases}
{\rm div}\,\big(A(x)\,\nabla u(x)\big)=g(x,u(x)), & {\mbox{ for every
  }}x\in\Omega\cap B_1,\\ 
A(x)\nabla u(x)\cdot \nu(x)=f(x,u(x)),&
{\mbox{ for every }}x\in B_1\cap \partial\Omega,
\end{cases}
\end{equation}
where~$\nu(x)$ denotes the exterior unit normal of~$\Omega$ at~$x\in\partial\Omega$,
$f\in C^1((\overline{\Omega}\setminus\{0\})\times \R)$, 
and $g:\Omega\times \R\rightarrow \R$ is a 
Carath\'eodory function.

We say that a function $u\in H^1(B_1\cap\Omega)$ is a weak solution to 
\eqref{MAIN EQ} if, for all $\varphi\in C^\infty_{\rm c}(B_1\cap \overline{\Omega})$,
\begin{equation}\label{eq:weak_solution}
\int_{B_1\cap\Omega} A(x)\,\nabla u(x)\cdot\nabla \varphi(x)\,dx=-\int_{B_1\cap\Omega}
g(x,u(x))\varphi(x)\,dx+\int_{B_1\cap\partial\Omega}f(x,u(x))\varphi(x)
\,d{\mathcal{H}}^{n-1}_x.
\end{equation}
As a technical observation, we
point out that the integrals at the right hand side of the above
identity are finite under the assumptions of Theorem~\ref{MONOTONIA} below  in view of the Poincar\'e-type Inequality and
the Trace Inequality proved in Corollary~\ref{1.3} and Lemma~\ref{TRACEIN1}  respectively. 

The use of Almgren-type frequency functions to study unique
continuation properties of elliptic partial differential equations
dates back to the pioneering contribution of  Garofalo and Lin
\cite{MR833393} and relies essentially on the possibility of deducing
from the boundedness of the frequency quotient a doubling-type
condition.
Unique continuation from boundary points was investigated via
Almgren-type monotonicity arguments  in \cite{MR1466583,MR1363203,MR3109767,MR1415331,MR2370633}.
As far as elliptic equations with Neumann-type boundary conditions
are concerned, we mention that in \cite{MR2162295}  boundary unique
continuation theorems and 
 doubling properties near the boundary were established under zero Neumann boundary conditions.
The main novelty of the present paper is a strong unique
continuation result for solutions whose restriction to the boundary vanishes at any order at the vertex
under non-homogeneous Neumann boundary conditions, while
in \cite[Theorem 1.7]{MR2162295} unique
continuation from the boundary was proved for solutions vanishing on
positive surface measure subsets of the boundary and satisfying a zero
 Neumann condition on such set.
The achievement of such a
result requires a combination of the
monotonicity argument with a blow-up analysis for scaled solutions, in
the spirit of  \cite{MR2735078,MR3169789}.

We now introduce the notation needed to define the frequency function  for our setting.
For~$r>0$, we define
\begin{equation}\label{DeH DEF}
\begin{split}
D(r)\,&:= r^{2-n}\int_{B_r\cap\Omega} A(x)\,|\nabla u(x)|^2\,dx
-r^{2-n}\int_{B_r\cap\partial\Omega} f(x,u(x))\,u(x)\,d{\mathcal{H}}^{n-1}_x
\\&\qquad\qquad+r^{2-n}\int_{B_r\cap \Omega} g(x,u(x))\,u(x)\,dx
\\
{\mbox{and }}\qquad H(r)\,:&=r^{1-n}\int_{\partial B_r\cap\Omega} A(x)\,u^2(x)
\,d{\mathcal{H}}^{n-1}_x\\&=
\int_{\Sigma} A(ry)\,u^2(ry)
\,d{\mathcal{H}}^{n-1}_y.
\end{split}
\end{equation}
We also introduce the ``Almgren frequency function'' in our framework, given by
\begin{equation}\label{DeH DEN}
{\mathcal{N}}(r):=\frac{D(r)}{H(r)}.
\end{equation}

With this setting, the pivotal result that we obtain is an appropriate
monotonicity formula with reminders, which we state as follows:

\begin{theorem}\label{MONOTONIA}
Suppose that \eqref{MUCK} holds and 
\begin{alignat}{3}\label{STR:HY1}
& |\nabla A(x)\cdot x|\le \e_r\,A(x),&&\qquad{\mbox{for a.e. $x\in B_r\cap\Omega$,
with }}\lim_{r\searrow0} \e_r=0,\\ 
& \label{TANGE}
|\nabla A(x)|\le \frac{C\,A(x)}{|x|},&&\qquad{\mbox{for a.e. $x\in
    B_1\cap\Omega$,}}\\
&\label{STR:HY2} |f(x,t)|\le C\,A(x)\,|x|^{\delta-1}\,|t|,
&&\qquad{\mbox{for a.e. $x\in \Omega \cap B_1$ and any
    $t\in\R$,}}\\
&\label{STR:HY2DER} |\nabla_x f(x,t)|\le C\,A(x)\,|x|^{\delta-2}\,|t|,
&&\qquad{\mbox{for a.e. $x\in \Omega \cap B_1$ and any $t\in\R$,}}\\
{\mbox{and}}\quad&
\label{g:grow} |g(x,t)|\le C\,A(x)\,|x|^{\delta-2}
                \,|t|,&&\qquad{\mbox{for a.e. $x\in B_1\cap\Omega$
and any $t\in\R$,}}
\end{alignat}
for some~$C>0$ and~$\delta>0$.

Let also
\begin{equation}\label{HYhstar} F(x,t):=\int_0^t f(x,\tau)\,d\tau.\end{equation}
Let
\begin{equation}\label{Linfty}
  u\in H^1(\Omega\cap B_1)\cap 
  L^\infty(\Omega\cap B_1)
\end{equation}
 be a 
solution of~\eqref{MAIN EQ} in the sense of \eqref{eq:weak_solution}, such that
\begin{equation}\label{NONTRIVIAL}
{\mbox{$u\not\equiv0$ in~$\Omega\cap B_{r}$,}}
\end{equation}
for all $r\in(0,1)$.

Then the following holds true.
\begin{enumerate}[\rm (i)]
\item There exists $r_0>0$ such that 
\begin{equation}\label{eq:H-N+1-pos}
H(r)>0\quad\text{and} \quad \mathcal N(r)+1>0\quad\text{for all
  $r\in(0,r_0)$};
\end{equation}
  in particular the function $\mathcal N$ defined in \eqref{DeH DEN}
  is well defined on $(0,r_0)$.
\item There exist $r_1\in (0,r_0)$ and $C_1>0$ such that
  \begin{equation}\label{eq:stima-N'}
        {\mathcal{N}}'(r)\geq
        -C_1\,\max\{r^\delta,\e_r\}r^{-1}(2+\mathcal
        N(r))\quad\text{for all }r\in(0,r_1).
  \end{equation}
\item If also
\begin{equation}\label{L1}
r\mapsto \frac{\e_r}{r}\in L^1(0,r_1),\end{equation} then  the limit
  \begin{equation}\label{LIMITgamma}
\gamma:=\lim_{r\searrow0}\mathcal N(r)
\end{equation}
exists, is finite and $\gamma\geq 0$.
\end{enumerate}
\end{theorem}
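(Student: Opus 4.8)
We prove the three claims in turn; the recurring mechanism is that, after the rescaling $x=ry$, the ``lower order'' pieces of $D$ and of $D'$ are absorbed by the Poincar\'e/Hardy inequality of Corollary~\ref{1.3} and the trace inequality of Lemma~\ref{TRACEIN1}, which apply with constants independent of $r$ because $A(r\,\cdot)$ inherits \eqref{MUCK}, \eqref{STR:HY1} and \eqref{TANGE} uniformly in $r\in(0,1)$. Write $\overline D(r):=r^{2-n}\int_{B_r\cap\Omega}A\,|\nabla u|^2\,dx\ge0$. For part~(i) the plan is to insert \eqref{STR:HY2} and \eqref{g:grow} into the two correction terms of \eqref{DeH DEF}, rescale, and apply Lemma~\ref{TRACEIN1} to the $f$-term and Corollary~\ref{1.3} to the $g$-term (using $|y|^{\delta-2}\le|y|^{-2}$) to obtain
\[
|D(r)-\overline D(r)|\le C\,r^{\delta}\big(\overline D(r)+H(r)\big),\qquad 0<r<1 ,
\]
whence $D(r)+H(r)\ge\tfrac12(\overline D(r)+H(r))$ for $r$ small, which is the second inequality in \eqref{eq:H-N+1-pos} once $H>0$ is known. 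To see that $H>0$ on some $(0,r_0)$: if $H(\rho)=0$ then by \eqref{MUCK} $u=0$ a.e.\ on $\partial B_\rho\cap\Omega$, so $u\,\mathbf{1}_{B_\rho}\in H^1(B_1\cap\Omega)$ has zero trace on $\partial B_1$ and is admissible in \eqref{eq:weak_solution}; testing against it gives $\int_{B_\rho\cap\Omega}A|\nabla u|^2=\int_{B_\rho\cap\partial\Omega}fu-\int_{B_\rho\cap\Omega}gu$, i.e.\ $D(\rho)=0$, hence $\overline D(\rho)=0$ by the displayed estimate, so $\nabla u\equiv0$ and then $u\equiv0$ on $B_\rho\cap\Omega$, contradicting \eqref{NONTRIVIAL}.

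For part~(ii) I would first differentiate $H(r)=\int_\Sigma A(ry)u(ry)^2\,d\mathcal H^{n-1}_y$: testing \eqref{MAIN EQ} against $u$ on $B_r\cap\Omega$ yields $H'(r)=\tfrac2r D(r)+\int_\Sigma(\nabla A(ry)\cdot y)\,u(ry)^2\,d\mathcal H^{n-1}_y$, and by \eqref{STR:HY1} the last term is $\le\tfrac{\e_r}{r}H(r)$ in modulus. For $D$, I would start from $D(r)=r^{2-n}\int_{\partial B_r\cap\Omega}A\,u\,\partial_\rho u\,d\mathcal H^{n-1}$ (again obtained by testing against $u$), differentiate in $r$, and replace the second radial derivative of $u$ using the equation; this is the Rellich--Pohozaev identity produced by testing \eqref{MAIN EQ} against $x\cdot\nabla u$ on $B_r\cap\Omega$. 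Here the conical geometry is decisive: $\nu(x)\cdot x=0$ on $\partial\Omega$ annihilates the term $\int_{B_r\cap\partial\Omega}A|\nabla u|^2(x\cdot\nu)$, and the Neumann condition turns the surviving boundary term into $\int_{B_r\cap\partial\Omega}f(x,u)(x\cdot\nabla u)$, which I would recast, through a tangential integration by parts on $\partial\Omega$ (using that $x$ is tangent to $\partial\Omega$ and $\mathrm{div}_{\partial\Omega}x=n-1$), in terms of $F$ from \eqref{HYhstar}. After recombination one reaches
\[
D'(r)=\frac2r\Big(r^{2-n}\int_{\partial B_r\cap\Omega}A\,(\partial_\rho u)^2\,d\mathcal H^{n-1}\Big)+R(r),
\]
where $R(r)$ collects the weight term $r^{1-n}\int_{B_r\cap\Omega}|\nabla u|^2(\nabla A\cdot x)$, bounded by $\tfrac{\e_r}{r}\overline D(r)$ via \eqref{STR:HY1}; the bulk terms in $g$ and in $\nabla_xF$, bounded by $\tfrac{Cr^{\delta}}{r}(\overline D(r)+H(r))$ via \eqref{g:grow}, \eqref{STR:HY2}, \eqref{STR:HY2DER} and Corollary~\ref{1.3}/Lemma~\ref{TRACEIN1}; and boundary terms of the type $r^{2-n}\int_{B_r\cap\partial\Omega}F$ together with the codimension-two ``edge'' term $r^{2-n}\int_{\partial B_r\cap\partial\Omega}\big(2F(x,u)-f(x,u)u\big)\,d\mathcal H^{n-2}$.

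To conclude~(ii), plug $D'$ and $H'$ into $\mathcal N'=D'/H-\mathcal N\,H'/H$: the weighted Cauchy--Schwarz inequality $\big(\int_{\partial B_r\cap\Omega}A\,u\,\partial_\rho u\big)^2\le\big(\int_{\partial B_r\cap\Omega}A\,u^2\big)\big(\int_{\partial B_r\cap\Omega}A(\partial_\rho u)^2\big)$ makes the two principal parts combine into a nonnegative quantity, while by part~(i) and \eqref{eq:H-N+1-pos} one has $\overline D(r),\,|D(r)|\le C(2+\mathcal N(r))H(r)$ for $r$ small, so each remainder, divided by $H(r)$, is bounded by $C_1\max\{r^{\delta},\e_r\}\,r^{-1}(2+\mathcal N(r))$; this is \eqref{eq:stima-N'}. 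Part~(iii) is then a Gr\"onwall-type argument: since $2+\mathcal N>1>0$ on $(0,r_1)$, \eqref{eq:stima-N'} says $\tfrac{d}{dr}\log(2+\mathcal N(r))\ge-C_1\max\{r^{\delta},\e_r\}\,r^{-1}$; as $\delta>0$ and \eqref{L1} holds, $\max\{s^{\delta},\e_s\}s^{-1}\le s^{\delta-1}+\e_s/s\in L^1(0,r_1)$, so $r\mapsto(2+\mathcal N(r))\exp\big(C_1\int_0^r\max\{s^{\delta},\e_s\}s^{-1}\,ds\big)$ is nondecreasing and bounded below by $0$, hence has a finite limit as $r\searrow0$; since the exponential tends to $1$, the limit \eqref{LIMITgamma} exists and is finite. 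Finally $\gamma\ge0$ since $D(r)\ge\overline D(r)-Cr^{\delta}(\overline D(r)+H(r))\ge-Cr^{\delta}H(r)$, so $\mathcal N(r)\ge-Cr^{\delta}\to0$.

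The bookkeeping of the rescalings and the uses of the Hardy/trace inequalities are routine. The genuinely delicate point is the estimate of $R(r)$ in part~(ii), and in particular of the codimension-two edge term $r^{2-n}\int_{\partial B_r\cap\partial\Omega}(2F-fu)\,d\mathcal H^{n-2}$: it cannot be bounded by the $H^1$-energy of $u$ alone, since there is no bounded trace of $H^1(B_1\cap\Omega)$ on the $(n-2)$-dimensional edge $\partial B_1\cap\partial\Omega$. This is exactly where the a priori bound \eqref{Linfty}, combined with up-to-the-boundary elliptic estimates away from the vertex, enters, in order to control $\int_{\partial B_r\cap\partial\Omega}u^2\,d\mathcal H^{n-2}$ and thereby close \eqref{eq:stima-N'}.
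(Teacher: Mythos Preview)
Your outline for parts~(i) and~(iii) is correct and matches the paper: the comparison $|D-\overline D|\le Cr^\delta(\overline D+H)$ via Corollary~\ref{cor:dis} and Lemma~\ref{TRACEIN1}, the contradiction argument for $H>0$, and the Gr\"onwall step are all as in the text. The structure of part~(ii) is also right through the Rellich--Pohozaev identity, the use of $\nu\cdot x=0$ on $\partial\Omega$, and the tangential integration by parts producing the edge term on $\partial B_r\cap\partial\Omega$.

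The gap is precisely in how you close that edge term. Your proposal is to invoke \eqref{Linfty} and elliptic estimates to bound $\int_{\partial B_r\cap\partial\Omega}u^2\,d\mathcal H^{n-2}$. But this gives at best $\int_{\partial B_r\cap\partial\Omega}u^2\le C\|u\|_\infty^2\,r^{n-2}$, so the corresponding piece of $R(r)/H(r)$ is of order $r^{\delta-1}\|u\|_\infty^2/H(r)$, which is \emph{not} of the form $C\max\{r^\delta,\e_r\}r^{-1}(2+\mathcal N(r))$: there is no a~priori lower bound on $H(r)$ to absorb the $\|u\|_\infty^2$ factor, and no doubling property is available yet (doubling is a \emph{consequence} of the boundedness of $\mathcal N$ you are trying to prove). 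So \eqref{eq:stima-N'} does not close this way.

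The paper's mechanism is different and does not use \eqref{Linfty} at this point. First, the Sobolev trace inequality on the spherical cap $\Sigma$ (from $H^1(\Sigma)$ to $L^2(\partial\Sigma)$) gives
\[
\int_{\partial B_r\cap\partial\Omega}A\,u^2\,d\mathcal H^{n-2}
\le C\,r^{n-2}H(r)+C\,r^{n-\frac32}\sqrt{H(r)}\sqrt{\,r^{2-n}\!\int_{\partial B_r\cap\Omega}A|\nabla u|^2\,}.
\]
The right-hand side involves $\int_{\partial B_r\cap\Omega}A|\nabla u|^2$, which is not one of $D,H,\overline D$; the paper then observes from the very formula~\eqref{DPRIMO} for $D'(r)$ that
\[
r^{2-n}\!\int_{\partial B_r\cap\Omega}A|\nabla u|^2
\le C\Big(D'(r)+\tfrac{n-2}{r}D(r)+r^{\delta-1}H(r)\Big).
\]
Feeding this back, all edge contributions to $\mathcal N'(r)$ are bounded by
\[
C\max\{r^\delta,\e_r\}\,r^{-1/2}\sqrt{\tfrac{D'(r)}{H(r)}+\tfrac{n-2}{r}\mathcal N(r)+Cr^{\delta-1}},
\]
so one arrives at an inequality in which $D'(r)$ appears on both sides. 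This self-reference is resolved by splitting: on the set $\Lambda=\{r:\ D'(r)H(r)\le D(r)H'(r)\}$ one has $\mathcal N'(r)\le0$ by definition \emph{and} $D'(r)\le D(r)H'(r)/H(r)$, which via the formula $H'=\tfrac{2}{r}D+O(\tfrac{\e_r}{r}H)$ yields $rD'(r)/H(r)\le 2\mathcal N(r)^2+C(2+\mathcal N(r))$, so the square root is $\le C(2+\mathcal N(r))$ and \eqref{eq:stima-N'} follows on $\Lambda$. Off $\Lambda$ one has $\mathcal N'(r)>0$, and \eqref{eq:stima-N'} is trivial since the right-hand side is nonpositive by \eqref{eq:H-N+1-pos}. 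This ``bad set'' trick is the missing ingredient in your argument.
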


We observe that the assumptions of Theorem~\ref{MONOTONIA}
are very general and do not necessarily require the weight~$A$ to be
Lipschitz continuous or the source terms~$f$ and~$g$
to be bounded.
In particular, estimate~\eqref{eq:stima-N'} requires assumptions~\eqref{STR:HY1} and~\eqref{TANGE} which could be
satisfied even by unbounded potentials, as for example $A(x)= \log|x|
( \cos (x_n/|x|) - 2 )$. On the other hand, to prove that $\mathcal N$
is bounded and has finite limit as $r\to0^+$ assumption~\eqref{L1}
is also needed; we observe that~\eqref{L1} forces the
boundedness of~$A$
but could be satisfied by non-Lipschitz continuous weights, like
$A(x)=1+|x|^\delta$ with $\delta$ positive and small, for example.

The functions $f$ and~$g$ can be singular as well,
in accordance with~\eqref{STR:HY2}
and~\eqref{g:grow}. To allow all these possible singularities,
it is crucial that the ``frequency function''
also takes into account the special behaviors of~$A$, $f$
and~$g$, as in~\eqref{DeH DEF}.
Moreover, the special geometry of the cone~$\Omega$
will turn out to be the cornerstone
for our main estimates to hold, thus providing an interesting
interplay between analytic and geometric properties of the problem.

We also observe that condition~\eqref{NONTRIVIAL} is quite natural, since
it requires that 
the solution is nontrivial in any neighborhood of the vertex
of the cone. Furthermore, under the additional assumption that~$A$ is
locally Lipschitz
continuous, assumption~\eqref{NONTRIVIAL} is satisfied
by all nontrivial solutions, in light of the classical unique
continuation principle in~\cite{MR882069}, see
also \cite{MR1233189}
(similarly, if~$A$ satisfies a  Muckenhoupt-type assumption,
then~\eqref{NONTRIVIAL} is a consequence of the unique continuation principle in~\cite{MR2370633}, see also~\cite{MR833393}).\medskip

{F}rom Theorem~\ref{MONOTONIA} and a ``doubling property'' method
one obtains a number of results of unique continuation type. In this
spirit, we first provide a unique continuation result
from the vertex of the cone with respect to interior points:

\begin{theorem}\label{UQ:1}
Let~$u$ be a 
solution of~\eqref{MAIN EQ}, under assumptions~\eqref{MUCK},
\eqref{STR:HY1}, \eqref{TANGE}, \eqref{STR:HY2}, \eqref{STR:HY2DER},
\eqref{g:grow}, \eqref{Linfty} and~\eqref{L1}.

Assume also that~$u$ vanishes at the origin at any order with
respect to interior points, namely that for every~$k\in\N$
\begin{equation}\label{UQ:2}
\lim_{\Omega\ni x\to 0} \frac{u(x)}{|x|^k}=0.
\end{equation}
Then there exists~$r>0$ such that
\begin{equation}\label{UQ:3}
{\mbox{$u\equiv0$ in~$\Omega\cap B_r$.}}\end{equation}
If, in addition,
$A$ is locally Lipschitz
continuous, then
\begin{equation}\label{UQ:4}
{\mbox{$u\equiv0$ in~$\Omega\cap B_1$.}}\end{equation}
\end{theorem}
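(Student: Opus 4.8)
The plan is to exploit Theorem~\ref{MONOTONIA} together with a standard doubling/iteration argument. First I would invoke part~(iii) of Theorem~\ref{MONOTONIA} (which applies since all its hypotheses, including~\eqref{L1}, are assumed) to produce the finite limit $\gamma=\lim_{r\searrow0}\mathcal N(r)\ge0$, as well as the positivity statement~\eqref{eq:H-N+1-pos} and the differential inequality~\eqref{eq:stima-N'} on $(0,r_1)$. The key quantitative consequence I want is a lower bound for $H(r)$ of the form $H(r)\ge c\,r^{2\gamma+2}$ (or at least $H(r)\ge c_k r^{2k}$ for every $k$, if $\gamma=+\infty$ were allowed — but here $\gamma$ is finite). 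This is obtained by deriving, from the equation $H'(r)=\frac{2}{r}\big(D(r)+o(1)\text{-type terms}\big)$ relating $H'/H$ to $\mathcal N(r)$ up to the controlled remainders coming from~\eqref{STR:HY1}--\eqref{g:grow}, a bound $\frac{H'(r)}{H(r)}\le \frac{2\mathcal N(r)+C}{r}+\text{(integrable remainder)}$; since $\mathcal N(r)\to\gamma$, integrating from $r$ to $r_1$ and using $r\mapsto\e_r/r\in L^1$ and $r\mapsto r^{\delta-1}\in L^1$ yields $H(r)\ge c\,r^{2\gamma+2+\eps}$ for any fixed small $\eps>0$ and $r$ small.

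Next I would translate the vanishing hypothesis~\eqref{UQ:2} into a statement about $H$. Since $u$ is bounded and $A$ satisfies~\eqref{MUCK}, from the second expression for $H$ in~\eqref{DeH DEF}, namely $H(r)=\int_\Sigma A(ry)u^2(ry)\,d\mathcal H^{n-1}_y$, the hypothesis $u(x)/|x|^k\to0$ as $\Omega\ni x\to0$ gives $\sup_{\Sigma}|u(ry)|=o(r^k)$, hence $H(r)=o(r^{2k})$ as $r\searrow0$, for every $k\in\N$. Comparing this with the lower bound $H(r)\ge c\,r^{2\gamma+2+\eps}$ forces a contradiction once $2k>2\gamma+2+\eps$, i.e. once $k$ is chosen larger than $\gamma+1$: we would get $0<c\,r^{2\gamma+2+\eps}\le H(r)=o(r^{2k})\to0$ faster, which is impossible for $r$ small. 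This contradicts~\eqref{NONTRIVIAL}, which was a standing hypothesis used to apply Theorem~\ref{MONOTONIA}. Therefore~\eqref{NONTRIVIAL} must fail, i.e.\ there exists $r>0$ with $u\equiv0$ in $\Omega\cap B_r$, which is exactly~\eqref{UQ:3}.

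Finally, for~\eqref{UQ:4}, under the additional assumption that $A$ is locally Lipschitz continuous, I would propagate the vanishing from the ball $B_r$ to all of $\Omega\cap B_1$ by the classical unique continuation principle of~\cite{MR882069} (see also~\cite{MR1233189}), combined with a connectedness/chain-of-balls argument: $\Omega\cap B_1$ is connected, $u$ solves a uniformly elliptic equation with Lipschitz leading coefficient and a potential term $g(x,u)/u$ (well defined where $u\ne0$, and more carefully handled via $|g(x,u)|\le C A(x)|x|^{\delta-2}|u|$ which puts the equation in a form covered by the strong unique continuation results cited), and $u\equiv0$ on an open subset; hence $u\equiv0$ on the whole connected component, giving~\eqref{UQ:4}.

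The main obstacle I expect is the first step: deriving the clean lower bound $H(r)\ge c\,r^{2\gamma+2+\eps}$ from~\eqref{eq:stima-N'} and~\eqref{eq:H-N+1-pos}. This requires establishing the precise differential identity for $H'(r)$ in terms of $D(r)$ and the Neumann/forcing remainders, controlling those remainders by the assumptions~\eqref{STR:HY1}--\eqref{g:grow} and~\eqref{L1} so that they are integrable near $0$, and then integrating the resulting differential inequality for $\log H$ carefully; the interior-vanishing hypothesis side ($H(r)=o(r^{2k})$) and the final contradiction are comparatively routine. The handling of the borderline case where $g$'s singularity exponent $\delta-2$ is only integrable after being paired against $u^2$ (rather than pointwise) also needs a little care, but is manageable via the Poincaré- and trace-type inequalities of Corollary~\ref{1.3} and Lemma~\ref{TRACEIN1}.
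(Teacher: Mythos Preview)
Your proposal is correct and follows the same contradiction strategy as the paper: assume~\eqref{NONTRIVIAL}, invoke Theorem~\ref{MONOTONIA} to get $\mathcal N$ bounded, control $H'/H$ via the identity~\eqref{DNUOVA}, and then play this against the infinite-order vanishing hypothesis~\eqref{UQ:2}. The execution differs slightly. The paper integrates $H'(r)/H(r)\le C/r$ over $(r,2r)$ to obtain a \emph{doubling} property $H(2r)\le C\,H(r)$, passes to the volume quantity $\int_{B_r\cap\Omega}A u^2$, iterates the doubling $m$ times to get $\int_{B_{r_0}\cap\Omega}Au^2\le C_0^m\int_{B_{r_0/2^m}\cap\Omega}Au^2$, and then uses $|u(x)|\le|x|^k$ (with $2^{2k}\ge 2C_0$) to make the right-hand side vanish as $m\to\infty$. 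You instead integrate $H'/H$ over $(r,r_1)$ to obtain a \emph{direct polynomial lower bound} $H(r)\ge c\,r^{M}$ for some fixed $M$, and compare with $H(r)=o(r^{2k})$ (which follows immediately from~\eqref{UQ:2} and the spherical-average expression for $H$). Your route is a bit more streamlined, avoiding the doubling iteration and the passage to the volume integral; the paper's route is the classical Garofalo--Lin mechanism. One minor slip: your exponent ``$2\gamma+2+\eps$'' should be $2\gamma+2\eps$ (or simply ``some fixed $M$''), but this is irrelevant since any fixed power suffices for the contradiction.
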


An interesting consequence of our Theorem \ref{MONOTONIA} deals with
blow-up limits. Namely, for each~$\lambda>0$,
we define
\begin{equation} \label{ulam}
u_\lambda(x):=\frac{u(\lambda x)}{\sqrt{H(\lambda)}}.
\end{equation}
We consider the Laplace-Beltrami
operator 
${\mathcal L}_\Sigma:=-\Delta_{\mathbb S^{n-1}}$ on the spherical cap
$\Sigma$ under null Neumann boundary conditions.
  By classical spectral theory, the spectrum of the  operator
${\mathcal L}_\Sigma$ is discrete and consists in a nondecreasing diverging sequence of
eigenvalues $0=\lambda_1(\Sigma)< \lambda_2(\Sigma)\leq\cdots\leq \lambda_k(\Sigma)\leq\cdots$
with finite multiplicity.

 In the following theorem we describe the
limit profiles of the blowed-up family \eqref{ulam} in terms of the
eigenvalues and the eigenfunctions of $\mathcal L_\Sigma$.

\begin{theorem}\label{BLOW-a}
Let~$u$ be a 
solution of~\eqref{MAIN EQ}, under assumptions~\eqref{MUCK},
\eqref{STR:HY1}, \eqref{TANGE}, \eqref{STR:HY2}, \eqref{STR:HY2DER},
\eqref{g:grow}, \eqref{Linfty} and~\eqref{L1}.

Assume that~\eqref{NONTRIVIAL} holds true,
\begin{equation}\label{eq:f_t}
|f_t(x,t)|\le C\,|x|^{\delta-1},
\quad\mbox{for a.e. $x\in \Omega \cap B_1$ and any $t\in\R$},
\end{equation}
 and that
\begin{equation}\label{ACONT}
\lim_{x\to0} A(x)=1.
\end{equation}
Then, up to a subsequence,
as~$\lambda\searrow0$, we have that~$u_\lambda$
converges strongly in~$H^1(\Omega\cap B_1)$ to a function~$\tilde u$ which
is positively homogeneous and can be written in the form
\begin{equation}\label{tildeupsi} \tilde u(x)= |x|^\gamma\,\psi\left( \frac{x}{|x|}\right),\end{equation}
where
\[
\gamma=-\frac{n-2}2+\sqrt{\left(\frac{n-2}2\right)^2+\lambda_{k_0}(\Sigma)}
\ge0
\]
for some $k_0\in\N\setminus\{0\}$
 and~$\psi$ is an eigenfunction of the operator $\mathcal L_\Sigma$
 associated to the eigenvalue $\lambda_{k_0}(\Sigma)$ such that
\begin{equation}\label{NORMALIZ}
\int_{\Sigma}
\psi^2(x)\,d{\mathcal{H}}^{n-1}_x=1.
\end{equation}
\end{theorem}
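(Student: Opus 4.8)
\textbf{Proof plan for Theorem~\ref{BLOW-a}.}
The plan is to run a standard Almgren blow-up argument, using Theorem~\ref{MONOTONIA} as the monotonicity input, and to identify the limit via the spectral decomposition of $\mathcal L_\Sigma$. First I would record the scaling behavior of the rescaled family. By~\eqref{ulam} and the definition of $H$ in~\eqref{DeH DEF}, one has $\int_\Sigma A(\lambda y)\,u_\lambda^2(\lambda y)\,d{\mathcal H}^{n-1}_y = 1$; more usefully, writing $H_\lambda(r)$, $D_\lambda(r)$, $\mathcal N_\lambda(r)$ for the quantities in~\eqref{DeH DEF}--\eqref{DeH DEN} associated to $u_\lambda$ and to the rescaled coefficients $A_\lambda(x)=A(\lambda x)$, $f_\lambda$, $g_\lambda$, one checks the exact identities $H_\lambda(r)=H(\lambda r)/H(\lambda)$ and $\mathcal N_\lambda(r)=\mathcal N(\lambda r)$. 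In particular $H_\lambda(1)=1$ for all $\lambda$, and $\mathcal N_\lambda(r)\to\gamma$ as $\lambda\searrow0$ for each fixed $r$, with $\gamma$ the limit from~\eqref{LIMITgamma}. The rescaled data satisfy the structural hypotheses~\eqref{MUCK}, \eqref{STR:HY1}--\eqref{g:grow} uniformly in $\lambda$ (the $|x|^{\delta}$-type bounds only improve under $\lambda\searrow0$, and~\eqref{ACONT} gives $A_\lambda\to1$ locally uniformly), which is what makes the estimates below uniform.

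Next I would obtain uniform $H^1(\Omega\cap B_1)$ bounds on $\{u_\lambda\}$. Using~\eqref{eq:stima-N'} one integrates to get that $\mathcal N(r)$, hence $\mathcal N_\lambda(r)$, is bounded on $(0,1)$ for $\lambda$ small; combining this with a differential inequality for $\tfrac{d}{dr}\log H(r)$ (the companion ``doubling'' estimate one reads off the monotonicity computation, of the form $H'(r)/H(r)=\tfrac{n-1}r+\tfrac{2D(r)}{rH(r)}+O(1)$, so $H'(r)/H(r)\le \tfrac{2\mathcal N(r)+n-1}r+\text{(integrable)}$ and similarly from below) yields two-sided bounds $c\,r^{2\gamma+\eta}\le H(\lambda r)/H(\lambda)\le C\,r^{2\gamma-\eta}$ for all small $\lambda$ and $r\in(0,1)$, with $\eta$ as small as desired. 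From $H_\lambda(r)\le C$ and the boundedness of $\mathcal N_\lambda$ one gets $r^{2-n}\int_{B_r\cap\Omega}A_\lambda|\nabla u_\lambda|^2\le C$ uniformly, whence $\|u_\lambda\|_{H^1(\Omega\cap B_1)}\le C$. Therefore, up to a subsequence, $u_\lambda\rightharpoonup \tilde u$ weakly in $H^1(\Omega\cap B_1)$, strongly in $L^2(\Omega\cap B_1)$ and in $L^2(\partial\Omega\cap B_1;{\mathcal H}^{n-1})$ by the compact trace embedding behind Lemma~\ref{TRACEIN1}. Passing to the limit in the weak formulation~\eqref{eq:weak_solution} for $u_\lambda$ and using~\eqref{STR:HY2}, \eqref{g:grow}, \eqref{eq:f_t} together with $\|u_\lambda\|_{L^\infty}\le \|u\|_{L^\infty}/\sqrt{H(\lambda)}$ controlled by the lower doubling bound (so that $|x|^{\delta-2}u_\lambda(\lambda x)\cdot$ test function is dominated and vanishes in the limit after the $\lambda^{\delta}$ prefactor), one finds that $\tilde u$ is a weak solution of the homogeneous problem $\operatorname{div}(\nabla \tilde u)=0$ in $\Omega\cap B_1$, $\nabla\tilde u\cdot\nu=0$ on $\partial\Omega\cap B_1$, and that $\int_\Sigma \tilde u^2(\cdot)\,d{\mathcal H}^{n-1}$ passes to the limit giving~\eqref{NORMALIZ}, so $\tilde u\not\equiv0$.

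The main work is upgrading weak to strong convergence and identifying the homogeneity. For strong convergence I would show $\lim_{\lambda\to0}\int_{B_1\cap\Omega}|\nabla u_\lambda|^2 = \int_{B_1\cap\Omega}|\nabla\tilde u|^2$ by testing the equation for $u_\lambda$ against $u_\lambda$ itself, using that the boundary and interior source contributions in $D_\lambda(1)$ tend to $0$ (same domination as above) and that $D_\lambda(1)=\mathcal N_\lambda(1)H_\lambda(1)=\mathcal N(\lambda)\to\gamma$, while for $\tilde u$ one has $\int_{B_1\cap\Omega}|\nabla\tilde u|^2 = \gamma\int_\Sigma \tilde u^2$; combined with weak lower semicontinuity and $A_\lambda\to1$ this forces $\|\nabla u_\lambda\|_{L^2}\to\|\nabla\tilde u\|_{L^2}$, hence strong $H^1$ convergence. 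For the homogeneity, I would show the frequency of the limit is constant: $\mathcal N_{\tilde u}(r)\equiv\gamma$ on $(0,1)$. Indeed, by strong convergence $H_{\tilde u}(r)=\lim H_\lambda(r)$ and $D_{\tilde u}(r)=\lim D_\lambda(r)$, so $\mathcal N_{\tilde u}(r)=\lim \mathcal N_\lambda(r)=\lim \mathcal N(\lambda r)=\gamma$ for every $r\in(0,1)$. The monotonicity identity applied to the clean equation for $\tilde u$ (with $A\equiv1$, $f\equiv g\equiv0$) degenerates to the classical Almgren case, where $\mathcal N'_{\tilde u}(r)=0$ is equivalent (via the Rellich--Pohozaev / Cauchy--Schwarz equality case) to $\tilde u$ being homogeneous of degree $\gamma$, i.e. $\tilde u(x)=|x|^\gamma\psi(x/|x|)$ with $\psi\in H^1(\Sigma)$. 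Plugging this ansatz into $\operatorname{div}(\nabla\tilde u)=0$ with null Neumann data gives $-\Delta_{{\mathbb S}^{n-1}}\psi=\gamma(\gamma+n-2)\psi$ in $\Sigma$ with $\partial_\nu\psi=0$ on $\partial\Sigma$, so $\mu:=\gamma(\gamma+n-2)$ is an eigenvalue $\lambda_{k_0}(\Sigma)$ of $\mathcal L_\Sigma$; solving the quadratic for the nonnegative root yields $\gamma=-\frac{n-2}2+\sqrt{(\frac{n-2}2)^2+\lambda_{k_0}(\Sigma)}\ge0$, and~\eqref{NORMALIZ} is the normalization already established. The delicate point throughout is the uniform control of $\|u_\lambda\|_{L^\infty}$ (equivalently, the sharp lower doubling bound $H(\lambda)\ge c\,\lambda^{2\gamma+\eta}$), since $L^\infty$-bounds are not preserved by the $H^1$-rescaling; this is exactly where the two-sided doubling consequence of Theorem~\ref{MONOTONIA} is indispensable, and where I expect the main obstacle to lie.
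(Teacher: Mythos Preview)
Your overall architecture---scaling identities $H_\lambda(r)=H(\lambda r)/H(\lambda)$ and $\mathcal N_\lambda(r)=\mathcal N(\lambda r)$, uniform $H^1$ bounds via boundedness of $\mathcal N$ and the upper doubling~\eqref{eq:doub_gen}, passage to the homogeneous Neumann Laplace problem for $\tilde u$, and identification of $\gamma$ via the equality case of Cauchy--Schwarz---matches the paper's proof. Two points deserve correction.

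\textbf{The strong convergence step is circular as written.} You argue that $\int_{B_1\cap\Omega}|\nabla u_\lambda|^2\to\gamma$ (correct: this follows from $D_\lambda(1)=\mathcal N(\lambda)\to\gamma$ together with the vanishing of the source contributions) and then assert $\int_{B_1\cap\Omega}|\nabla\tilde u|^2=\gamma\int_\Sigma\tilde u^2$. But this last identity is exactly $\mathcal N_{\tilde u}(1)=\gamma$, which you only establish \emph{after} strong convergence (``by strong convergence $D_{\tilde u}(r)=\lim D_\lambda(r)$''). Weak lower semicontinuity gives only $\mathcal N_{\tilde u}(r)\le\gamma$, which combined with Almgren monotonicity for $\tilde u$ does not by itself force equality. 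The paper breaks this loop by an independent argument: it uses interior/boundary $H^2$ regularity on an annular region $\Omega_1$ with $\Sigma\subset\Omega_1\subset\Omega\cap(B_2\setminus B_{1/2})$ (here the hypothesis~\eqref{eq:f_t} is needed to control $\|\nabla(f_\lambda(\cdot,u_\lambda))\|_{L^2}$), obtains a uniform $H^{3/2}(\Sigma)$ bound by trace, and hence strong $H^1(\Sigma)$ convergence along a subsequence. This lets one pass to the limit directly in the boundary term $\int_\Sigma A_\lambda u_\lambda\,\partial_\nu u_\lambda$ of the energy identity, yielding $\int_{B_1\cap\Omega}|\nabla u_\lambda|^2\to\int_\Sigma\tilde u\,\partial_\nu\tilde u=\int_{B_1\cap\Omega}|\nabla\tilde u|^2$ without knowing $\mathcal N_{\tilde u}(1)$ in advance. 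This $H^2$-on-annuli step is the missing ingredient.

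\textbf{The $L^\infty$/lower doubling concern is a red herring.} You flag uniform control of $\|u_\lambda\|_{L^\infty}$ (equivalently, a lower bound $H(\lambda)\ge c\lambda^{2\gamma+\eta}$) as the main obstacle. The paper uses neither. The rescaled sources satisfy $|g_\lambda(x,u_\lambda(x))|\le C\lambda^\delta|x|^{\delta-2}|u_\lambda(x)|$ and $|f_\lambda(x,u_\lambda(x))|\le C\lambda^\delta|x|^{\delta-1}|u_\lambda(x)|$ because the growth hypotheses~\eqref{STR:HY2},~\eqref{g:grow} are \emph{linear} in $t$; the $\sqrt{H(\lambda)}$ factors cancel and one is left with a $\lambda^\delta$ prefactor times quantities bounded by the $H^1$ norm of $u_\lambda$ (via Corollary~\ref{cor:dis} and Lemma~\ref{TRACEIN1}). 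Only the upper doubling $H(R\lambda)\le C_R H(\lambda)$ is needed. (Incidentally, your formula for $H'/H$ should not contain an $(n-1)/r$ term, since $H$ in~\eqref{DeH DEF} already carries the $r^{1-n}$ normalization; the correct relation is $H'(r)/H(r)=2\mathcal N(r)/r+O(\varepsilon_r/r)$.)
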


{F}rom Theorem~\ref{BLOW-a}, one can also
obtain
a unique continuation result
from the vertex of the cone with respect to boundary points:

\begin{theorem}\label{UnA-ass}
Let~$u$ be a 
solution of~\eqref{MAIN EQ}, under assumptions~\eqref{MUCK},
\eqref{STR:HY1}, \eqref{TANGE}, \eqref{STR:HY2}, \eqref{STR:HY2DER},
\eqref{g:grow}, \eqref{Linfty}, \eqref{L1}, \eqref{eq:f_t} and~\eqref{ACONT}.

Assume also that~$u$ vanishes at the origin at any order with
respect to boundary points, namely that for every~$k\in\N$
\begin{equation}\label{UQ:2X}
\lim_{\partial\Omega\ni x\to 0} \frac{u(x)}{|x|^k}=0.
\end{equation}
Then there exists~$r>0$ such that
\begin{equation}\label{UQ:3X}
{\mbox{$u\equiv0$ in~$\Omega\cap B_r$.}}\end{equation}
If, in addition,
$A$ is locally Lipschitz
continuous, then
\begin{equation}\label{UQ:4X}
{\mbox{$u\equiv0$ in~$\Omega$.}}\end{equation}
\end{theorem}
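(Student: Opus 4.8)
The plan is to argue by contradiction, assuming that \eqref{NONTRIVIAL} holds (that is, $u\not\equiv0$ in $\Omega\cap B_r$ for every $r\in(0,1)$), and to reach a contradiction with the normalization \eqref{NORMALIZ} of the blow-up profile. Under this assumption all the hypotheses of Theorem~\ref{BLOW-a} are in force, so we may fix a sequence $\lambda_j\searrow0$ along which $u_{\lambda_j}\to\tilde u$ strongly in $H^1(\Omega\cap B_1)$, with $\tilde u(x)=|x|^\gamma\psi(x/|x|)$, $\gamma\ge0$ finite, and $\psi$ an eigenfunction of $\mathcal L_\Sigma$ (under homogeneous Neumann conditions on $\partial\Sigma$) with $\int_\Sigma\psi^2\,d\mathcal H^{n-1}=1$. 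The first ingredient we need is the polynomial lower bound for $H$ coming from the doubling-type information behind Theorem~\ref{MONOTONIA}, exactly as in the proof of Theorem~\ref{UQ:1}: since $H'(r)/H(r)=2\mathcal N(r)/r+O(\e_r/r)$ with $\mathcal N(r)\to\gamma$ and $\e_r/r\in L^1(0,r_1)$, integration yields $c>0$ and $r_*\in(0,1)$ such that
\[
H(r)\;\ge\; c\,r^{\,2\gamma+1}\qquad\text{for all }r\in(0,r_*).
\]

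The core step — and the main obstacle — is to transfer the infinite-order vanishing of $u$ along $\partial\Omega$ into genuine vanishing of the profile $\tilde u$ on the lateral boundary, hence of $\psi$ on $\partial\Sigma$. Unlike the interior-point case of Theorem~\ref{UQ:1}, the boundary hypothesis \eqref{UQ:2X} gives no direct control of $H(r)$, which is precisely why the blow-up classification of Theorem~\ref{BLOW-a} is indispensable here. I would work on the fixed annular piece $\Gamma:=(B_1\setminus\overline{B_{1/2}})\cap\partial\Omega$, a $C^2$ hypersurface bounded away from the vertex, and fix an integer $k>\gamma+1$. Setting $\eta_k(\lambda):=\sup\{\,|u(y)|\,|y|^{-k}:y\in\partial\Omega,\ 0<|y|\le\lambda\,\}$, assumption \eqref{UQ:2X} gives $\eta_k(\lambda)\to0$ as $\lambda\searrow0$; using that $\partial\Omega$ is a cone, the change of variables $y=\lambda x$ and the definition \eqref{ulam} of $u_\lambda$ yield
\[
\int_{\Gamma} u_{\lambda_j}^2\,d\mathcal H^{n-1}
=\frac{1}{H(\lambda_j)}\,\lambda_j^{1-n}\!\!\int_{(B_{\lambda_j}\setminus B_{\lambda_j/2})\cap\partial\Omega}\!\! u^2\,d\mathcal H^{n-1}
\;\le\;\frac{C\,\eta_k(\lambda_j)^2\,\lambda_j^{\,2k}}{H(\lambda_j)}
\;\le\;\frac{C}{c}\,\eta_k(\lambda_j)^2\,\lambda_j^{\,2k-2\gamma-1}\;\xrightarrow[\;j\to\infty\;]{}\;0.
\]
On the other hand, the trace operator $H^1(\Omega\cap B_1)\to L^2(\Gamma)$ is bounded (one may invoke the trace inequality of Lemma~\ref{TRACEIN1}, or simply localize away from the vertex), so the strong $H^1$ convergence $u_{\lambda_j}\to\tilde u$ forces $u_{\lambda_j}\to\tilde u$ in $L^2(\Gamma)$; comparing with the previous display gives $\tilde u\equiv0$ on $\Gamma$, and since $|x|^\gamma>0$ there we conclude $\psi\equiv0$ on $\partial\Sigma$.

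To close, observe that $\psi$ solves $-\Delta_{\mathbb S^{n-1}}\psi=\lambda_{k_0}(\Sigma)\psi$ in $\Sigma$ with $\partial_\nu\psi=0$ on $\partial\Sigma$, so together with $\psi\equiv0$ on $\partial\Sigma$ it has vanishing Cauchy data on the $C^2$ hypersurface $\partial\Sigma$. Extending $\psi$ by $0$ across $\partial\Sigma$ to a slightly larger spherical cap $\widetilde\Sigma\supset\overline\Sigma$ produces a function in $H^1(\widetilde\Sigma)$ which is a weak solution of $-\Delta_{\mathbb S^{n-1}}\widetilde\psi=\lambda_{k_0}(\Sigma)\widetilde\psi$ (the matching of both $\psi$ and its conormal derivative annihilates the interface contributions) and which vanishes on the nonempty open set $\widetilde\Sigma\setminus\overline\Sigma$; by elliptic regularity $\widetilde\psi$ is real-analytic, hence identically zero, so $\psi\equiv0$ on $\Sigma$, contradicting \eqref{NORMALIZ}. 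Therefore \eqref{NONTRIVIAL} must fail, which is exactly \eqref{UQ:3X}. Finally, if $A$ is locally Lipschitz continuous, then $g(x,0)=0$ by \eqref{g:grow} and $V(x):=g(x,u(x))/u(x)$ obeys $|V(x)|\le C|x|^{\delta-2}$, hence is bounded on $(B_1\setminus\overline{B_\rho})\cap\Omega$ for every $\rho>0$; applying the classical strong unique continuation principle of \cite{MR882069,MR1233189} on $({\rm int}\,\Omega)\cap(B_1\setminus\overline{B_\rho})$ and letting $\rho\searrow0$ propagates the vanishing of $u$ from $\Omega\cap B_r$ first to ${\rm int}\,\Omega$ and then, by continuity of the trace, to all of $\Omega$, which gives \eqref{UQ:4X}.
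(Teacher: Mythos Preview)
Your argument is correct, and it shares with the paper both the essential ingredients: the blow-up classification of Theorem~\ref{BLOW-a} and the ``Cauchy data'' observation that a function satisfying simultaneously homogeneous Dirichlet and Neumann conditions on a $C^2$ hypersurface must vanish by unique continuation. The logical order, however, is reversed. The paper first invokes the Cauchy-data argument to show that the trace of $\tilde u$ on $B_1\cap\partial\Omega$ is nontrivial; then, since $u_\lambda\to\tilde u$ in $L^2(B_1\cap\partial\Omega)$, the identity
\[
\frac{\sqrt{H(\lambda)}}{\lambda^k}=\frac{\|\lambda^{-k}u(\lambda\cdot)\|_{L^2(B_1\cap\partial\Omega)}}{\|u_\lambda\|_{L^2(B_1\cap\partial\Omega)}}
\]
combined with~\eqref{UQ:2X} yields $H(\lambda)=o(\lambda^{2k})$ for every $k$, and the contradiction is finally obtained by feeding this infinite-order decay of $H$ into the doubling inequality $H(2r)\le C\,H(r)$ and integrating, exactly as in the proof of Theorem~\ref{UQ:1}. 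You proceed the other way round: you first secure the polynomial \emph{lower} bound $H(r)\ge c\,r^{2\gamma+1}$ from $\mathcal N(r)\to\gamma$ (any bound $H(r)\ge c\,r^{C}$ would in fact suffice), use it together with~\eqref{UQ:2X} to force $\int_{\Gamma}u_{\lambda_j}^2\to0$ and hence $\tilde u\equiv0$ on the lateral boundary, and only then apply the Cauchy-data/analyticity argument to $\psi$ on the sphere to contradict~\eqref{NORMALIZ}. Your route is a bit more direct at the endgame, since the contradiction is reached at the level of the blow-up profile without the additional doubling-and-integration step; the paper's route is slightly more quantitative (it actually establishes $H(\lambda)=o(\lambda^{2k})$ for all $k$) and reuses verbatim the machinery already set up for Theorem~\ref{UQ:1}.
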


We stress that while~\eqref{UQ:2} is assumed for interior points,
we have that hypothesis~\eqref{UQ:2X} focuses on boundary points.\medskip

The rest of the article is organized as follows.
Section~\ref{TTOL} presents a number of ancillary results, to be exploited
in the proofs of the main theorems. In particular, we will collect there
some observations on the geometry of the cone and suitable functional inequalities.

The proof of Theorem~\ref{MONOTONIA} is presented in Section~\ref{Orafgytise}
and will serve as a pivotal result for the main theorems of this paper.
Namely, 
Theorem~\ref{UQ:1} will be proved in Section~\ref{9SIoryuitis},
Theorem~\ref{BLOW-a} will be proved in Section~\ref{senrtoesyuiert03},
and Theorem~\ref{UnA-ass}
will be proved in Section~\ref{oedkcpoetrtegfi}.

\section{Toolbox}\label{TTOL}

This section collects ancillary results used in the main proofs.

\subsection{Cone structure}

We recall here an elementary property of the cones:

\begin{lemma}
Let~$\Omega\subset\R^n$ be a cone with respect to the origin. Then
\begin{equation}\label{0102303}
\nu(x)\cdot x=0\qquad{\mbox{for any }}x\in\partial\Omega\setminus\{0\}.
\end{equation}\end{lemma}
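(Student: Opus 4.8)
The plan is to exploit the dilation invariance of the cone, which says that $tx\in\Omega$ for all $t>0$ precisely when $x\in\Omega$. Fix a point $x_0\in\partial\Omega\setminus\{0\}$. Since $\Omega$ is a cone, the entire open ray $\{tx_0:t>0\}$ is contained in $\partial\Omega$: indeed, $tx_0\in\overline\Omega$ for all $t>0$ because $x_0\in\overline\Omega$ and dilations are homeomorphisms fixing the origin, while $tx_0\notin\Omega$ for all $t>0$ since otherwise $x_0=t^{-1}(tx_0)$ would lie in $\Omega$. Hence the curve $t\mapsto tx_0$ lies on $\partial\Omega$, which near $x_0$ is a $C^2$ (in particular $C^1$) hypersurface because $\Sigma$ has $C^2$ boundary in $\mathbb S^{n-1}$ and $\partial\Omega$ is the cone over $\partial\Sigma$ away from the vertex.

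Next I would differentiate this curve at $t=1$. Its velocity vector is $\frac{d}{dt}\big|_{t=1}(tx_0)=x_0$, and since the curve stays on the hypersurface $\partial\Omega$, this tangent vector $x_0$ belongs to the tangent space $T_{x_0}\partial\Omega$. The exterior unit normal $\nu(x_0)$ is by definition orthogonal to $T_{x_0}\partial\Omega$, so $\nu(x_0)\cdot x_0=0$. Since $x_0\in\partial\Omega\setminus\{0\}$ was arbitrary, this proves \eqref{0102303}.

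The only point requiring a little care is the regularity claim that $\partial\Omega\setminus\{0\}$ is a $C^1$ hypersurface so that the tangent space and the normal are well defined; this follows from the hypothesis that $\Sigma$ has $C^2$ boundary in $\mathbb S^{n-1}$, together with the fact that, away from the origin, the map $(r,\omega)\mapsto r\omega$ is a $C^\infty$ diffeomorphism between $(0,\infty)\times\mathbb S^{n-1}$ and $\R^n\setminus\{0\}$, under which $\partial\Omega\setminus\{0\}$ corresponds to $(0,\infty)\times\partial\Sigma$. I do not expect any genuine obstacle here; the argument is essentially the observation that radial rays are tangent curves of the conical boundary.
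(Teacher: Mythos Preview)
Your argument is correct and is essentially the same as the paper's: both differentiate the radial curve $t\mapsto tx_0\subset\partial\Omega$ at $t=1$ to conclude that $x_0$ is tangent to $\partial\Omega$ and hence orthogonal to $\nu(x_0)$. The paper phrases this via a local defining function $\Phi_0$ (writing $\Phi_0(tx_0)=0$ and differentiating), whereas you use tangent-space language directly, but the content is identical.
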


\begin{proof}
Fixed~$x_0\in\partial\Omega\setminus\{0\}$, we have that there
exists~$r_0>0$ such that~$\Omega\cap B_r(x_0)$
coincides with the sublevel sets of some nondegenerate
function~$\Phi_0:B_r(x_0)\to\R$,
with~$\nu(x)=\frac{\nabla\Phi_0(x)}{|\nabla\Phi_0(x)|}$.
By the cone structure of~$\Omega$, we thereby see that, for any~$t$ close to~$1$,
$$ 0=\Phi_0(x_0)=\Phi_0(t x_0),$$
and so
$$ 0=\left.\frac{d}{dt} \Phi_0(t x_0)\right|_{t=1}=\nabla \Phi_0(x_0)\cdot x_0=
|\nabla\Phi(x_0)|\,\nu(x_0)\cdot x_0.$$
This proves
that~$\nu(x_0)\cdot x_0=0$ and establishes~\eqref{0102303}.
\end{proof}

\subsection{A Poincar\'e-type Inequality}

In this subsection, we provide some results concerning suitable
weighted Poincar\'e-type Inequalities which will play an important role
in some of the technical estimates needed to prove the main results.


\begin{lemma}\label{POIV} Let~$\mu\in(-\infty,n)$.
Let~$\Omega\subset\R^n$ be a cone with respect to the origin such that
the spherical cap $\Sigma$ defined in \eqref{SPHCAP} is smooth.
Let $A\in \L^\infty(\Omega)$ satisfy \eqref{TANGE}.
For every $r>0$ and $u\in C^\infty(\overline{\Omega\cap B_r})$  
\[
\int_{\Omega\cap B_r} \left(\frac {n-\mu}2 A(x)+\nabla A(x)\cdot
  x\right)\frac{u^2(x)}{|x|^\mu}\,dx
\leq \frac1{r^{\mu-1}}\int_{\partial B_r\cap
  \Omega}A(x)u^2(x)\,d{\mathcal{H}}^{n-1}_x+\frac{2}{n-\mu}\int_{\Omega\cap B_r}\frac{
  A(x)|\nabla u(x)|^2}{|x|^{\mu-2}}\,dx.
\]
\end{lemma}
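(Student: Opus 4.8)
The plan is to prove this weighted Poincaré inequality via a straightforward integration-by-parts (divergence theorem) argument applied to a well-chosen radial vector field. The natural candidate is the field $X(x) := A(x)\,|x|^{-\mu}\,u^2(x)\,x$ on $\Omega\cap B_r$. First I would compute its divergence:
\[
\operatorname{div} X = \big(\nabla A\cdot x\big)\frac{u^2}{|x|^\mu} + A\,u^2\,\operatorname{div}\!\left(\frac{x}{|x|^\mu}\right) + 2A\,\frac{u\,\nabla u\cdot x}{|x|^\mu},
\]
and since $\operatorname{div}(x\,|x|^{-\mu}) = (n-\mu)|x|^{-\mu}$, this gives
\[
\operatorname{div} X = \big(\nabla A\cdot x\big)\frac{u^2}{|x|^\mu} + (n-\mu)A\,\frac{u^2}{|x|^\mu} + 2A\,\frac{u\,\nabla u\cdot x}{|x|^\mu}.
\]
Integrating over $\Omega\cap B_r$ and applying the divergence theorem, the boundary of $\Omega\cap B_r$ splits into the spherical part $\partial B_r\cap\Omega$, where the outward normal is $x/|x|$ so the flux is $r^{1-\mu}\int_{\partial B_r\cap\Omega}A\,u^2\,d\mathcal{H}^{n-1}$, and the lateral part $B_r\cap\partial\Omega$, where the outward normal $\nu(x)$ satisfies $\nu(x)\cdot x = 0$ by \eqref{0102303} — so that contribution vanishes. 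This is exactly where the conical geometry enters and it is the clean structural point of the argument.

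Rearranging, I obtain
\[
\int_{\Omega\cap B_r}\left(\frac{n-\mu}{2}A + \nabla A\cdot x\right)\frac{u^2}{|x|^\mu}\,dx
= \frac{1}{r^{\mu-1}}\int_{\partial B_r\cap\Omega}A\,u^2\,d\mathcal{H}^{n-1} - \int_{\Omega\cap B_r}A\,\frac{u\,\nabla u\cdot x}{|x|^\mu}\,dx - \frac{n-\mu}{2}\int_{\Omega\cap B_r}A\,\frac{u^2}{|x|^\mu}\,dx,
\]
wait — more precisely, I keep half of the $(n-\mu)A\,u^2|x|^{-\mu}$ term on the left (to produce the stated coefficient $\tfrac{n-\mu}{2}$) and move the cross term to the right. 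The cross term $-\int A\,u\,(\nabla u\cdot x)\,|x|^{-\mu}\,dx$ is then estimated by Cauchy–Schwarz / Young's inequality: writing $\nabla u\cdot x$ against $u$ with the split $|x|^{-\mu} = |x|^{-\mu/2}\cdot|x|^{1}\cdot|x|^{-\mu/2-1}$ and $|\nabla u\cdot x|\le |x|\,|\nabla u|$, one bounds it by $\tfrac{n-\mu}{4}\int A\,u^2|x|^{-\mu} + \tfrac{1}{n-\mu}\int A\,|\nabla u|^2|x|^{2-\mu}$. The absorbed first piece combines with the half-term already kept on the right to cancel against a further half of the left-hand coefficient, leaving precisely $\tfrac{n-\mu}{2}$ on the left and the factor $\tfrac{2}{n-\mu}$ in front of the gradient term on the right, as claimed.

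The only genuine obstacle is justifying the integrations and the divergence theorem near the vertex, since the weight $|x|^{-\mu}$ is singular there when $\mu > 0$. Since $u\in C^\infty(\overline{\Omega\cap B_r})$ is bounded and $A\in L^\infty$, and $\mu < n$, the integrand $|x|^{-\mu}$ is integrable near $0$; the cleanest rigorous route is to apply the divergence theorem on the truncated domain $\Omega\cap(B_r\setminus B_\rho)$ and let $\rho\searrow 0$, checking that the flux through $\partial B_\rho\cap\Omega$, which is $O(\rho^{n-\mu})$ by \eqref{MUCK} and the boundedness of $u$, tends to $0$ because $n-\mu > 0$. The hypothesis \eqref{TANGE}, i.e. $|\nabla A(x)|\le C A(x)/|x|$, ensures the term $\nabla A\cdot x\,|x|^{-\mu}u^2$ is likewise integrable (it is controlled by $C A(x)|x|^{-\mu}u^2$), so all quantities in the final inequality are finite and the limiting procedure is legitimate.
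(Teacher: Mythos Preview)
Your argument is correct and is essentially the paper's own proof: the same vector field $A\,u^2\,|x|^{-\mu}x$, the same divergence computation, the cone condition~\eqref{0102303} to kill the lateral flux, and Young's inequality on the cross term with the same constants. Your verbal bookkeeping around the factor of $2$ in the cross term is a bit tangled, but the underlying computation matches the paper exactly; the additional $\rho\searrow0$ excision argument you include is a reasonable justification that the paper leaves implicit.
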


\begin{proof} Let $u\in C^\infty(\overline{\Omega\cap B_r})$. Since 
\[
\mathop{\rm div}\left(Au^2\frac{x}{|x|^\mu}\right)=\frac{n-\mu}{|x|^\mu}Au^2
+u^2\nabla A \cdot \frac{x}{|x|^\mu}+2Au\nabla u\cdot \frac{x}{|x|^\mu},
\]
by the Divergence Theorem and \eqref{0102303} we deduce that 
\begin{align*}&
(n-\mu)\int_{\Omega\cap B_r} \frac{A(x)u^2(x)}{|x|^\mu}\,dx\\&=
\int_{\Omega\cap B_r} \left[
\mathop{\rm div}\left(A(x)u^2(x)\frac{x}{|x|^\mu}\right)
-u^2\nabla A (x)\cdot \frac{x}{|x|^\mu}-2A(x)\,u(x)\nabla u(x)\cdot \frac{x}{|x|^\mu},
\right]
\\&= \frac1{r^{\mu-1}}\int_{\partial B_r\cap
  \Omega}A(x)\,u^2(x)\,d{\mathcal{H}}^{n-1}-\int_{\Omega\cap B_r}
  \frac{(\nabla A(x)\cdot x) u^2(x)}{|x|^\mu}\,dx
-2\int_{\Omega\cap B_r}\frac{A(x)u\nabla u(x)\cdot x}{|x|^\mu}\,dx\\
&\leq \frac1{r^{\mu-1}}\int_{\partial B_r\cap
  \Omega}A(x)u^2(x)\,d{\mathcal{H}}^{n-1}-\int_{\Omega\cap B_r}\frac{
  (\nabla A(x)\cdot x) u^2(x)}{|x|^\mu}\,dx
+\frac {n-\mu}2\int_{\Omega\cap B_r}\frac{A(x)u^2(x)}{|x|^\mu}\,dx\\
&\quad+\frac{2}{n-\mu}\int_{\Omega\cap B_r}
\frac{A(x)|\nabla u(x)|^2}{|x|^{\mu-2}}\,dx,
\end{align*}
and hence the conclusion follows.
\end{proof}

\begin{corollary}\label{1.3} Let~$\mu\in(-\infty,n)$.
Let~$\Omega\subset\R^n$ be a cone with respect to the origin such that
the spherical cap $\Sigma$ defined in \eqref{SPHCAP} is smooth.
Let $c\in\left(0,\frac{n-\mu}{2}\right)$ and $A\in \L^\infty(\Omega)$
satisfy \eqref{TANGE}
and \eqref{STR:HY1}.
Then there exists~$r_\mu>0$ such that
for every $r\in(0,r_\mu)$ and $u\in C^\infty(\overline{\Omega\cap B_r})$  
$$
c\int_{\Omega\cap B_r} \frac{A(x)u^2(x)}{|x|^{\mu}}\,dx
\leq \frac1{r^{\mu-1}}\int_{\partial B_r\cap
  \Omega}A(x)u^2(x)\,d{\mathcal{H}}^{n-1}_x+\frac{2}{n-\mu}
  \int_{\Omega\cap B_r}\frac{A(x)|\nabla u(x)|^2}{|x|^{\mu-2}}\,dx.$$
\end{corollary}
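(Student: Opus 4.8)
The plan is to deduce this estimate directly from Lemma~\ref{POIV} by bounding from below the coefficient $\frac{n-\mu}{2}A(x)+\nabla A(x)\cdot x$ that appears on the left-hand side there. The point is that~\eqref{STR:HY1} makes the term $\nabla A(x)\cdot x$ negligible with respect to $A(x)$ as $r\searrow0$, so that near the vertex the coefficient is bounded below by $c\,A(x)$ once $c<\frac{n-\mu}{2}$.

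First I would fix the scale: since $c<\frac{n-\mu}{2}$ (an interval that is nonempty precisely because $\mu<n$ forces $\frac{n-\mu}{2}>0$) and $\e_r\to0$ as $r\searrow0$ by~\eqref{STR:HY1}, there exists $r_\mu>0$ such that $\e_r\le\frac{n-\mu}{2}-c$ for every $r\in(0,r_\mu)$. Then, for such an $r$ and for $u\in C^\infty(\overline{\Omega\cap B_r})$, I would use~\eqref{STR:HY1} in the form $\nabla A(x)\cdot x\ge-\e_r\,A(x)$ together with the positivity of $A$ (a standing assumption, also encoded in~\eqref{TANGE}) to get, for a.e.\ $x\in\Omega\cap B_r$,
$$\frac{n-\mu}{2}\,A(x)+\nabla A(x)\cdot x\ \ge\ \Big(\frac{n-\mu}{2}-\e_r\Big)A(x)\ \ge\ c\,A(x)\ \ge\ 0 .$$
Dividing by $|x|^\mu>0$, multiplying by $u^2(x)\ge0$ and integrating over $\Omega\cap B_r$ yields
$$c\int_{\Omega\cap B_r}\frac{A(x)u^2(x)}{|x|^{\mu}}\,dx\ \le\ \int_{\Omega\cap B_r}\Big(\frac{n-\mu}{2}\,A(x)+\nabla A(x)\cdot x\Big)\frac{u^2(x)}{|x|^{\mu}}\,dx .$$

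It then suffices to apply Lemma~\ref{POIV} to bound the right-hand side above by
$\frac1{r^{\mu-1}}\int_{\partial B_r\cap\Omega}A\,u^2\,d{\mathcal H}^{n-1}+\frac{2}{n-\mu}\int_{\Omega\cap B_r}\frac{A\,|\nabla u|^2}{|x|^{\mu-2}}\,dx$, which is exactly the claimed inequality. I do not anticipate any real obstacle here: the only mildly delicate point is the choice of $r_\mu$, which combines the decay $\e_r\to0$ from~\eqref{STR:HY1} with the strict inequality $c<\frac{n-\mu}{2}$; everything else is a direct consequence of Lemma~\ref{POIV}.
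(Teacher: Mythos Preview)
Your proposal is correct and follows essentially the same argument as the paper: use~\eqref{STR:HY1} to bound $\frac{n-\mu}{2}A(x)+\nabla A(x)\cdot x\ge\big(\frac{n-\mu}{2}-\e_r\big)A(x)\ge cA(x)$ for $r$ small, then invoke Lemma~\ref{POIV}. The only difference is that you spell out the choice of $r_\mu$ and the intermediate integration step in more detail than the paper does.
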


\begin{proof} Exploiting~\eqref{STR:HY1}, we observe that
$$\frac {n-\mu}2 A(x)+\nabla A(x)\cdot x\ge
\frac {n-\mu}2 A(x)-
\e_r\,A(x)\ge cA(x),
$$
as long as~$r$ is small enough, and hence the desired result follows by Lemma~\ref{POIV}.
\end{proof}

For $\mu<2$ the previous corollary yields the following result.
\begin{corollary}\label{cor:dis} 
Let $\mu<2$. Let~$\Omega\subset\R^n$ be a cone with respect to the origin such that
the spherical cap $\Sigma$ defined in \eqref{SPHCAP} is smooth.
Let $c\in\left(0,\frac{n-\mu}{2}\right)$ and $A\in \L^\infty(\Omega)$
satisfy \eqref{TANGE}
and \eqref{STR:HY1}.
Then there exists~$r_\mu>0$ such that,
for every $r\in(0,r_\mu)$ and $u\in H^1(\Omega\cap B_r)$,
$u|x|^{-\mu/2}\in L^2(\Omega\cap B_r)$ and 
$$
c\int_{\Omega\cap B_r} \frac{A(x)u^2(x)}{|x|^{\mu}}\,dx
\leq \frac1{r^{\mu-1}}\int_{\partial B_r\cap
  \Omega}A(x)u^2(x)\,d{\mathcal{H}}^{n-1}_x+\frac{2\, r^{2-\mu}}{n-\mu}
  \int_{\Omega\cap B_r}A(x)|\nabla u(x)|^2\,dx.$$
\end{corollary}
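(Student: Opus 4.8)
The plan is to reduce the statement to Corollary~\ref{1.3}. Since $n\ge2$ and $\mu<2$, we have $\mu<n$ and $\frac{n-\mu}{2}>0$, so the hypothesis $c\in\bigl(0,\frac{n-\mu}{2}\bigr)$ is meaningful and Corollary~\ref{1.3} supplies a radius $r_\mu>0$ with the stated property for smooth functions; we claim the same $r_\mu$ works here. Fix $r\in(0,r_\mu)$ and $u\in C^\infty(\overline{\Omega\cap B_r})$. Because $2-\mu>0$ and $|x|\le r$ on $\Omega\cap B_r$, we have $|x|^{-(\mu-2)}=|x|^{2-\mu}\le r^{2-\mu}$ for a.e.\ $x\in\Omega\cap B_r$; substituting this pointwise bound in the last integral of the inequality of Corollary~\ref{1.3} gives precisely the asserted estimate for every $u\in C^\infty(\overline{\Omega\cap B_r})$.

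It remains to remove the smoothness assumption. Since $\Sigma$ is smooth, $\Omega\cap B_r$ is a bounded Lipschitz domain (the side of the cone and the sphere $\partial B_r$ meet orthogonally, by~\eqref{0102303}), so $C^\infty(\overline{\Omega\cap B_r})$ is dense in $H^1(\Omega\cap B_r)$ and the trace operator into $L^2\bigl(\partial(\Omega\cap B_r)\bigr)$ is continuous. Given $u\in H^1(\Omega\cap B_r)$, take $u_k\in C^\infty(\overline{\Omega\cap B_r})$ with $u_k\to u$ in $H^1(\Omega\cap B_r)$ and, passing to a subsequence, $u_k\to u$ a.e.\ in $\Omega\cap B_r$. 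Applying the smooth case to each $u_k$, the right-hand side converges to the corresponding expression for $u$: the boundary term by continuity of the trace and $A\in L^\infty(\Omega)$, the Dirichlet term by $H^1$-convergence and $A\in L^\infty(\Omega)$. Hence the right-hand sides are uniformly bounded, and Fatou's Lemma applied to the nonnegative integrands $A u_k^2|x|^{-\mu}$ gives
\[
c\int_{\Omega\cap B_r}\frac{A(x)u^2(x)}{|x|^{\mu}}\,dx\le\liminf_{k\to\infty}c\int_{\Omega\cap B_r}\frac{A(x)u_k^2(x)}{|x|^{\mu}}\,dx\le\frac1{r^{\mu-1}}\int_{\partial B_r\cap\Omega}A(x)u^2(x)\,d{\mathcal{H}}^{n-1}_x+\frac{2\,r^{2-\mu}}{n-\mu}\int_{\Omega\cap B_r}A(x)|\nabla u(x)|^2\,dx.
\]
In particular the left-hand side is finite, and as $A$ is bounded away from $0$ (by~\eqref{MUCK}) this also yields $u|x|^{-\mu/2}\in L^2(\Omega\cap B_r)$.

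The one point requiring care is this last limiting argument: the left-hand integral is singular when $\mu>0$, so one cannot expect convergence of $\int A u_k^2|x|^{-\mu}$, only the one-sided lower semicontinuity furnished by Fatou's Lemma — which is exactly what is needed, since all the remaining terms genuinely converge thanks to the $L^\infty$ bound on $A$, the $H^1$-convergence of $u_k$, and the continuity of the trace on the Lipschitz domain $\Omega\cap B_r$. Everything else is the elementary pointwise bound $|x|^{2-\mu}\le r^{2-\mu}$.
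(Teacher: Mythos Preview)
Your proof is correct and follows essentially the same route as the paper's own argument: reduce to Corollary~\ref{1.3} for smooth functions via the pointwise bound $|x|^{2-\mu}\le r^{2-\mu}$, then extend to $H^1$ by density, using Fatou's Lemma for the singular left-hand side and genuine convergence for the right-hand side. The paper compresses the extension step into the single phrase ``by density and the Fatou's Lemma''; you have simply unpacked it, correctly noting that $\Omega\cap B_r$ is Lipschitz (so density and trace continuity hold) and that only lower semicontinuity is available for the weighted term.
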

\begin{proof}
  The inequality for $u\in C^\infty(\overline{\Omega\cap B_r})$
  follows esily from Corollary \ref{1.3} and the fact that, since
  $2-\mu>0$, $|x|^{2-\mu}\leq r^{2-\mu}$ in $\Omega\cap B_r$. The
  conclusion follows by density and the Fatou's Lemma. 
\end{proof}

\subsection{Trace Inequalities}

Now we present a result of trace-type which will be exploited in the proofs
of the main theorems.

\begin{lemma}\label{TRACEIN1} Let~$\gamma \in(-\infty,n-1)$.
Let~$\Omega\subset\R^n$ be a cone with respect to the origin such that
the spherical cap $\Sigma$ defined in \eqref{SPHCAP} is smooth.
Let $A\in \L^\infty(\Omega)$
satisfy \eqref{MUCK}. For every $r>0$ and $u\in C^\infty(\overline{\Omega\cap B_r})$  we have that
$$ \int_{\partial \Omega\cap B_r}\frac{A(x)\,u^2(x) }{|x|^\gamma}
\,d{\mathcal{H}}^{n-1}\le C\,\int_{\Omega\cap B_r}\left[
\frac{A(x)\,|\nabla u(x)|^2}{|x|^{\gamma-1}}
+\frac{A(x) u^2(x)}{|x|^{\gamma+1}}\right]\,dx,$$
for some~$C>0$ independent of $r$.
Furthermore, if $\gamma<1$, then every function   
$u\in H^1(\Omega\cap B_r)$ has a trace belonging to $L^2(\partial
\Omega\cap B_r;|x|^{-\gamma/2})$ and  
$$ \int_{\partial \Omega\cap B_r}\frac{A(x)\,u^2(x) }{|x|^\gamma}
\,d{\mathcal{H}}^{n-1}\le C\,
\left[
r^{1-\gamma}\int_{\Omega\cap B_r}
A(x)\,|\nabla u(x)|^2\,dx+\int_{\Omega\cap B_r}
\frac{A(x) u^2(x)}{|x|^{\gamma+1}}\right]\,dx.$$
\end{lemma}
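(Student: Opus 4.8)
The plan is to pass to polar coordinates $x=\rho\omega$, with $\rho=|x|>0$ and $\omega\in\mathbb S^{n-1}$, and reduce everything to the classical trace inequality on the smooth compact manifold‑with‑boundary $\overline\Sigma$. Under this change of variables $\Omega\cap B_r$ corresponds to $(0,r)\times\Sigma$ and the lateral boundary $\partial\Omega\cap B_r$ to $(0,r)\times\partial\Sigma$; since by \eqref{0102303} the rays of the cone meet the spheres orthogonally, the volume and surface elements become $dx=\rho^{n-1}\,d\rho\,d\mathcal H^{n-1}_\omega$ on $\Omega$ and $d\mathcal H^{n-1}_x=\rho^{n-2}\,d\rho\,d\mathcal H^{n-2}_\omega$ on $\partial\Omega$. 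First I would fix $\rho\in(0,r)$ and apply to $\omega\mapsto u(\rho\omega)\in C^\infty(\overline\Sigma)$ the trace inequality on $\overline\Sigma$,
\[
\int_{\partial\Sigma}u^2(\rho\omega)\,d\mathcal H^{n-2}_\omega\le C_\Sigma\left(\int_\Sigma|\nabla_{\mathbb S^{n-1}}(u(\rho\,\cdot))|^2\,d\mathcal H^{n-1}_\omega+\int_\Sigma u^2(\rho\omega)\,d\mathcal H^{n-1}_\omega\right),
\]
with $C_\Sigma$ depending only on $n$ and $\Sigma$ (and in particular not on $\rho$). Using that the spherical gradient of $u(\rho\,\cdot)$ at $\omega$ equals $\rho$ times the tangential part of $\nabla u(\rho\omega)$, so that its modulus is $\le\rho\,|\nabla u(\rho\omega)|$, I would multiply by $\rho^{n-2-\gamma}$ and integrate in $\rho$ over $(0,r)$; rewriting the three resulting integrals in Cartesian coordinates — here the exponent bookkeeping $\rho^{n-2-\gamma}\cdot\rho^2=\rho^{1-\gamma}\cdot\rho^{n-1}$ and $\rho^{n-2-\gamma}=\rho^{-(\gamma+1)}\cdot\rho^{n-1}$ is what makes the weights match the statement — yields
\[
\int_{\partial\Omega\cap B_r}\frac{u^2}{|x|^\gamma}\,d\mathcal H^{n-1}\le C_\Sigma\left(\int_{\Omega\cap B_r}\frac{|\nabla u|^2}{|x|^{\gamma-1}}\,dx+\int_{\Omega\cap B_r}\frac{u^2}{|x|^{\gamma+1}}\,dx\right).
\]
The hypothesis $\gamma<n-1$ enters precisely here: it is what makes the weights $|x|^{-\gamma}$ on $\partial\Omega$ and $|x|^{-\gamma-1}$ in $\Omega$ integrable near the vertex, so that all these quantities are finite for $u\in C^\infty(\overline{\Omega\cap B_r})$. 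Reinserting $A$ and using $c\le A\le 1/c$ from \eqref{MUCK} on both sides then gives the first inequality with $C=C_\Sigma/c^2$, visibly independent of $r$.

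For the second statement I would take $\gamma<1$, so that $1-\gamma>0$ and hence $|x|^{1-\gamma}\le r^{1-\gamma}$ on $\Omega\cap B_r$; inserting $\int_{\Omega\cap B_r}A|\nabla u|^2|x|^{1-\gamma}\,dx\le r^{1-\gamma}\int_{\Omega\cap B_r}A|\nabla u|^2\,dx$ into the first inequality yields the claimed bound for every $u\in C^\infty(\overline{\Omega\cap B_r})$. To extend it to arbitrary $u\in H^1(\Omega\cap B_r)$ I would use the density of $C^\infty(\overline{\Omega\cap B_r})$ in $H^1(\Omega\cap B_r)$ (available since $\Sigma$ is smooth), together with the weighted Hardy‑type inequality obtained from Lemma~\ref{POIV} applied with the trivial weight $A\equiv1$ and $\mu=\gamma+1<2$ (combined with an ordinary trace estimate on $\Omega\cap\partial B_r$), which gives $\int_{\Omega\cap B_r}v^2|x|^{-(\gamma+1)}\,dx\le C(n,\gamma,\Sigma,r)\,\|v\|_{H^1(\Omega\cap B_r)}^2$ for every $v\in H^1(\Omega\cap B_r)$. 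Then, taking $u_k\in C^\infty(\overline{\Omega\cap B_r})$ with $u_k\to u$ in $H^1$, the second inequality applied to $u_k-u_m$, together with this Hardy bound and $A\in L^\infty$, shows that $(u_k|_{\partial\Omega\cap B_r})$ is Cauchy in $L^2(\partial\Omega\cap B_r;|x|^{-\gamma/2})$; its limit agrees with the usual $L^2$‑trace of $u$ on $\partial\Omega\cap(B_r\setminus B_\varepsilon)$ for every $\varepsilon>0$, hence it \emph{is} the trace of $u$ and belongs to $L^2(\partial\Omega\cap B_r;|x|^{-\gamma/2})$. Letting $k\to\infty$ in the inequality for $u_k$ — the gradient term converges since $u_k\to u$ in $H^1$ and $A\in L^\infty$, the zero‑order term by the Hardy bound, and the left‑hand side by the $L^2$‑convergence of the traces — gives the second inequality for $u$.

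The reduction itself is elementary; the points that need a little care are the exponent bookkeeping in the polar‑coordinate computation and the fact that the trace constant $C_\Sigma$ on $\overline\Sigma$ is independent of the radius $\rho$, which is exactly what makes the final constant $r$‑independent. For the approximation step the slightly delicate issue is to have a weighted Hardy inequality valid for \emph{every} $r>0$, which is why I would invoke Lemma~\ref{POIV} with $A\equiv1$ rather than Corollary~\ref{cor:dis}, whose conclusion is only stated for sufficiently small radii.
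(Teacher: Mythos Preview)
Your proposal is correct and follows essentially the same approach as the paper: pass to polar coordinates, apply the Sobolev trace inequality on the fixed compact manifold~$\overline\Sigma$ at each radius~$\rho$, integrate in~$\rho$ with the appropriate exponent bookkeeping, and then use~\eqref{MUCK} to reinsert~$A$; for the second part both you and the paper bound~$|x|^{1-\gamma}\le r^{1-\gamma}$ and extend by density. Your density argument (via Lemma~\ref{POIV} with $A\equiv1$ and a Cauchy-sequence estimate in the weighted trace space) is more detailed than the paper's one-line ``by density and the Fatou's Lemma'', but the underlying strategy is identical.
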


\begin{proof} We let~$u\in C^\infty(\overline{\Omega\cap B_r})$.
Also, for all~$\rho\in(0,r)$ and~$\theta\in\Sigma$,
we define~$u^{(\rho)}(\theta):=u(\rho\theta)$.
By Fubini's Theorem and the
Sobolev Trace Theorem on manifolds we have that 
\begin{equation*}\begin{split}
\int_{\partial \Omega\cap B_r}\frac{\,u^2(x) }{|x|^\gamma}
\,d{\mathcal{H}}^{n-1}&=
\int_0^r\rho^{-\gamma}\left(\int_{\partial \Omega\cap \partial
  B_\rho}
u^2
\,d{\mathcal{H}}^{n-2}\right)\,d\rho\\
&=\int_0^r\rho^{-\gamma+n-2}\left(\int_{\partial \Sigma}
u^2(\rho\theta)
\,d\theta\right)\,d\rho\\
&=\int_0^r\rho^{-\gamma+n-2}\left(\int_{\partial \Sigma}|
u^{(\rho)}(\theta)|^2
\,d\theta\right)\,d\rho\\
&\leq C\,\int_0^r\rho^{-\gamma+n-2}\left(\int_{\Sigma}
\Big(|\nabla_\theta u^{(\rho)}(\theta)|^2+|u^{(\rho)}(\theta)|^2\Big)
\,d\theta\right)\,d\rho,
\end{split}\end{equation*}
where~$\nabla_\theta$ denotes the tangential gradient along~$\Sigma$,
so that, if~$x=\rho\theta$,
\begin{eqnarray*}
|\nabla_\theta u^{(\rho)}(\theta)|=
\rho\left|
\nabla u(x)-\big(\nabla u(x)\cdot x\big)\frac{x}{|x|^2}
\right|\le \rho\,|\nabla u(x)|.
\end{eqnarray*}
Hence, in view of \eqref{MUCK}, we find that
\begin{eqnarray*}
\int_{\partial \Omega\cap B_r}\frac{A(x)\,u^2(x) }{|x|^\gamma}
\,d{\mathcal{H}}^{n-1}
&\le&
\frac Cc\,\int_0^r\rho^{-\gamma+n-2}\left(\int_{\Sigma}
\Big(
\rho^2 |\nabla u (\rho\theta)|^2+u^2(\rho\theta)\Big)
\,d\theta\right)\,d\rho
\\&=&
\frac Cc\,\int_{\Omega\cap B_r}|x|^{-\gamma-1}
\Big(
|x|^2 |\nabla u(x)|^2+ u^2(x)\Big)
\,dx \\&\leq&
\frac C{c^2}\,\int_{\Omega\cap B_r}|x|^{-\gamma-1}
\Big(
|x|^2 A(x)|\nabla u(x)|^2+A(x) u^2(x)\Big)
\,dx,
\end{eqnarray*}
which yields the inequality for functions in
$C^\infty(\overline{\Omega\cap B_r})$. If $\gamma<1$, then
$|x|^{1-\gamma}\leq r^{1-\gamma}$ in $\Omega\cap B_r$, then  The
  conclusion follows by density and the Fatou's Lemma.
\end{proof}

\section{Proof of Theorem~\ref{MONOTONIA}}\label{Orafgytise}

We first observe that,  by 
 elliptic regularity theory (see e.g. Theorem~8.13
 in~\cite{MR2399851}, \cite{MR0125307, MR0162050} or
 \cite{MR0350177}) we have that, under the assumptions of Theorem
 \ref{MONOTONIA}, 
\begin{equation}\label{eq:regH2}
u\in H^2\big(\Omega\cap(B_r\setminus B_\delta)\big),\quad\text{for all
}0<\delta<r<1.
\end{equation}
We denote by~$\nu$ both the exterior normal at~$\partial\Omega$ and
the exterior normal at~$\partial B_r$, since no confusion can arise.
Testing the equation in~\eqref{MAIN EQ} against the solution itself, we see that
\begin{equation*}
\begin{split}
\int_{B_r\cap\Omega} gu\,&=
\int_{B_r\cap\Omega} {\rm div}(A\nabla u)\,u\\
&=\int_{B_r\cap\Omega} \Big( {\rm div}(Au\nabla u)-A|\nabla u|^2\Big)\\
&=\int_{\partial B_r\cap\Omega} Au\nabla u\cdot\nu+
\int_{B_r\cap\partial\Omega} Au\nabla u\cdot\nu
-\int_{B_r\cap\Omega} A|\nabla u|^2\\
&=\int_{\partial B_r\cap\Omega} Au\nabla u\cdot\nu+
\int_{B_r\cap\partial\Omega} fu
-\int_{B_r\cap\Omega} A|\nabla u|^2.
\end{split}
\end{equation*}
Hence, recalling~\eqref{DeH DEF},
\begin{equation}\label{INHPRIMO}
\int_{\partial B_r\cap\Omega} Au\nabla u\cdot\nu=r^{n-2} D(r).
\end{equation}
Using again~\eqref{MAIN EQ},
we also observe that
\begin{equation}\label{DIV CALC}
\begin{split}
&{\rm div}\left( A(\nabla u\cdot x)\nabla u-\frac{A}{2}|\nabla u|^2 x\right)-(\nabla u\cdot x)g
\\
= \;&(\nabla u\cdot x){\rm div}( A\nabla u)-(\nabla u\cdot x)g
+ A\nabla u\cdot\nabla(\nabla u\cdot x)
-\frac12{\rm div}(A|\nabla u|^2 x)
\\
= \;&
\sum_{i,j=1}^n \Big(A\partial_i u\,\partial_i(\partial_j u\,x_j)
-\frac12 \partial_i(A(\partial_ju)^2 x_i)\Big)\\
= \;&
\sum_{i,j=1}^n \Big( A\partial_i u\,\partial_{ij}^2 u \,x_j
+A(\partial_i u)^2\delta_{ij}
-\frac12 \partial_i A(\partial_ju)^2 x_i
-A\partial_ju\,\partial_{ij}^2u \,x_i
-\frac12 A(\partial_ju)^2 
\Big)\\
= \;&
\sum_{i,j=1}^n \Big( 
A(\partial_i u)^2\delta_{ij}
-\frac12 \partial_i A(\partial_ju)^2 x_i
-\frac12 A(\partial_ju)^2 
\Big)\\ =\;&
\frac{(2-n)\,A}{2}|\nabla u|^2-\frac12|\nabla u|^2\nabla A\cdot x.
\end{split}
\end{equation}
On the other hand, from~\eqref{DeH DEF} we know that
\begin{equation}\label{DPRIMO}
\begin{split}
D'(r)\,&= (2-n)r^{1-n}\int_{B_r\cap\Omega} A|\nabla u|^2
+r^{2-n}\int_{\partial B_r\cap\Omega} A|\nabla u|^2\\&\qquad
-(2-n)r^{1-n}\int_{B_r\cap\partial\Omega} fu
-r^{2-n}\int_{\partial B_r\cap\partial\Omega} fu\\&\qquad
+(2-n)r^{1-n}\int_{B_r\cap\Omega}gu
+r^{2-n}\int_{\partial B_r\cap\Omega}gu,
\end{split}
\end{equation}
and (recalling that $\Omega$ is a cone, hence~$\Omega/r=\Omega$ for each~$r>0$)
\begin{equation}\label{HPRIMO}
\begin{split}
H'(r)\,&= 
\int_{\Sigma} \nabla A(ry)\cdot y\,u^2(ry)
\,d{\mathcal{H}}^{n-1}_y
+
2\int_{\Sigma} A(ry)\,u(ry)\nabla u(ry)\cdot y
\,d{\mathcal{H}}^{n-1}_y
\\&= r^{1-n}
\int_{\partial B_r\cap\Omega} \nabla A\cdot \nu\,u^2
+
2r^{1-n}\int_{\partial B_r\cap\Omega} Au\nabla u\cdot\nu.
\end{split}
\end{equation}
Thus, comparing~\eqref{INHPRIMO} with~\eqref{HPRIMO} we conclude that
\begin{equation*}
H'(r)-r^{1-n}\int_{\partial B_r\cap\Omega} \nabla A\cdot \nu\,u^2=
2r^{1-n}\int_{\partial B_r\cap\Omega} Au\nabla u\cdot\nu
= \frac{2D(r)}{r},
\end{equation*}
and therefore
\begin{equation}\label{DNUOVA}
D(r)=\frac{rH'(r)}2-
\frac{r^{2-n}}2\int_{\partial B_r\cap\Omega} \nabla A\cdot \nu\,u^2.
\end{equation}
From \eqref{eq:regH2} it follows that, for all $0<\delta<r<1$, 
$A(\nabla u\cdot x)\nabla u-\frac{A}{2}|\nabla u|^2 x\in
W^{1,1}(\Omega\cap(B_r\setminus B_\delta)$ so that 
\begin{multline}\label{eq:4}
\int_{\Omega\cap(B_r\setminus B_\delta)}
{\rm div}\left( A(\nabla u\cdot x)\nabla u-\frac{A}{2}|\nabla u|^2
  x\right)=
\int_{\partial B_r\cap\Omega}r\,\bigg(
A(\nabla u\cdot \nu)^2
-\frac A2|\nabla u|^2 \bigg)\\
-
\int_{\partial B_\delta\cap\Omega}\delta\,\bigg(
A(\nabla u\cdot \nu)^2
-\frac A2|\nabla u|^2 \bigg)
+\int_{(B_r\setminus B_\delta)\cap\partial\Omega}
\Big( f\,\nabla u\cdot x-\tfrac A2|\nabla u|^2 x\cdot\nu\Big).
\end{multline}
Since 
\[
  \int_0^{1} \left[\int_{\Omega\cap\partial B_r}|\nabla
    u|^2\right]\,dr=
\int_{\Omega\cap B_1}|\nabla
    u|^2<+\infty,
\]
there exists a decreasing sequence $\{\delta_n\}\subset
(0,1)$ such that $\lim_{n\to +\infty} \delta_n=0$ and
\begin{equation*}
 \delta_n \int_{\Omega\cap\partial B_{\delta_n}}|\nabla
    u|^2\longrightarrow 0 \text{\quad as }
  n\to +\infty .
\end{equation*}
Choosing $\delta=\delta_n$ in \eqref{eq:4} and letting $n\to\infty$ we
then obtain 
\begin{multline*}
\int_{\Omega\cap B_r}
{\rm div}\left( A(\nabla u\cdot x)\nabla u-\frac{A}{2}|\nabla u|^2
  x\right)\\
=
\int_{\partial B_r\cap\Omega}r\,\bigg(
A(\nabla u\cdot \nu)^2
-\frac A2|\nabla u|^2 \bigg)
+\int_{B_r \cap\partial\Omega}
\Big( f\,\nabla u\cdot x-\tfrac A2|\nabla u|^2 x\cdot\nu\Big).
\end{multline*}
Therefore, taking into account \eqref{DIV CALC},
\begin{eqnarray*}
&& (2-n)r^{1-n}\int_{B_r\cap\Omega} A|\nabla u|^2\\
&=& 2r^{1-n}\int_{B_r\cap\Omega} \left[\frac12
|\nabla u|^2\nabla A\cdot x+
{\rm div}\left( A(\nabla u\cdot x)\nabla u-\frac{A}{2}|\nabla u|^2 x\right)-
(\nabla u\cdot x)g
\right]\\
&=& r^{1-n}\int_{B_r\cap\Omega} \Big(
|\nabla u|^2\nabla A\cdot x-2
(\nabla u\cdot x)g
\Big)
+
\int_{\partial B_r\cap\Omega}\Big(
2r^{-n}A(\nabla u\cdot x)^2
-r^{2-n}A|\nabla u|^2 \Big)\\&&\qquad
+r^{1-n}\int_{B_r\cap\partial\Omega}
\Big( 2 f\,\nabla u\cdot x-A|\nabla u|^2 x\cdot\nu\Big).
\end{eqnarray*}
We thereby substitute this identity into~\eqref{DPRIMO} and we conclude that
\begin{equation*}
\begin{split}
D'(r)\,&= r^{1-n}\int_{B_r\cap\Omega} \Big(
|\nabla u|^2\nabla A\cdot x-2
(\nabla u\cdot x)g
\Big)
+
2r^{-n}\int_{\partial B_r\cap\Omega}
A(\nabla u\cdot x)^2
\\&\qquad
+r^{1-n}\int_{B_r\cap\partial\Omega}
\Big( 2f\,\nabla u\cdot x-A|\nabla u|^2 x\cdot\nu\Big)\\
&\qquad
-(2-n)r^{1-n}\int_{B_r\cap\partial\Omega} fu
-r^{2-n}\int_{\partial B_r\cap\partial\Omega} fu
\\&\qquad
+(2-n)r^{1-n}\int_{B_r\cap\Omega}gu
+r^{2-n}\int_{\partial B_r\cap\Omega}gu.
\end{split}
\end{equation*}
{F}rom this and~\eqref{DNUOVA}, we find that
\begin{equation}\label{HHPD}
\begin{split}&
D'(r)H(r)-H'(r)D(r)\\
=\;& 
H(r)\left[
r^{1-n}\int_{B_r\cap\Omega} \Big(
|\nabla u|^2\nabla A\cdot x-2
(\nabla u\cdot x)g
\Big)
+
2r^{-n}\int_{\partial B_r\cap\Omega}
A(\nabla u\cdot x)^2\right.\\&\quad
+r^{1-n}\int_{B_r\cap\partial\Omega}
\Big( 2f\,\nabla u\cdot x-A|\nabla u|^2 x\cdot\nu\Big)\\&\quad
-(2-n)r^{1-n}\int_{B_r\cap\partial\Omega} fu
-r^{2-n}\int_{\partial B_r\cap\partial\Omega} fu
\\&\quad\left.
+(2-n)r^{1-n}\int_{B_r\cap\Omega}gu
+r^{2-n}\int_{\partial B_r\cap\Omega}gu
\right]\\
&\quad-\frac{r(H'(r))^2}2+
\frac{r^{2-n}\,H'(r)}2\int_{\partial B_r\cap\Omega} \nabla A\cdot \nu\,u^2.
\end{split}
\end{equation}
On the other hand, recalling~\eqref{HPRIMO}, we see that
\begin{equation*}
\begin{split}&
-\frac{r(H'(r))^2}2+
\frac{r^{2-n}\,H'(r)}2\int_{\partial B_r\cap\Omega} \nabla A\cdot \nu\,u^2\\
=\;& 
-\frac{r}2
\left(r^{1-n}
\int_{\partial B_r\cap\Omega} \nabla A\cdot \nu\,u^2
+
2r^{1-n}\int_{\partial B_r\cap\Omega} Au\nabla u\cdot\nu\right)^2\\
&\quad+
\frac{r^{2-n}}2\left(\int_{\partial B_r\cap\Omega} \nabla A\cdot \nu\,u^2\right)
\left(
r^{1-n}
\int_{\partial B_r\cap\Omega} \nabla A\cdot \nu\,u^2
+
2r^{1-n}\int_{\partial B_r\cap\Omega} Au\nabla u\cdot\nu\right)\\
=\;& 
-\frac{r^{3-2n}}2\left(\int_{\partial B_r\cap\Omega} \nabla A\cdot \nu\,u^2\right)^2
-2r^{3-2n}\left(\int_{\partial B_r\cap\Omega} \nabla A\cdot \nu\,u^2\right)
\left(\int_{\partial B_r\cap\Omega} Au\nabla u\cdot\nu\right)\\
&\quad-2r^{3-2n}\left(\int_{\partial B_r\cap\Omega} Au\nabla u\cdot\nu\right)^2 \\
&\quad+
\frac{r^{3-2n}}2\left(\int_{\partial B_r\cap\Omega} \nabla A\cdot \nu\,u^2\right)^2
+r^{3-2n}\left(\int_{\partial B_r\cap\Omega} \nabla A\cdot \nu\,u^2\right)
\left(\int_{\partial B_r\cap\Omega} Au\nabla u\cdot\nu\right)
\\=\;& 
-r^{3-2n}\left(\int_{\partial B_r\cap\Omega} \nabla A\cdot \nu\,u^2\right)
\left(\int_{\partial B_r\cap\Omega} Au\nabla u\cdot\nu\right)
-2r^{3-2n}\left(\int_{\partial B_r\cap\Omega} Au\nabla u\cdot\nu\right)^2 .
\end{split}
\end{equation*}
Hence, substituting this identity into~\eqref{HHPD}, we conclude that
\begin{equation}\label{PLUAQ}
\begin{split}&
D'(r)H(r)-H'(r)D(r)\\
=\;& 
H(r)\left[
r^{1-n}\int_{B_r\cap\Omega} \Big(
|\nabla u|^2\nabla A\cdot x-2
(\nabla u\cdot x)g
\Big)
+
2r^{-n}\int_{\partial B_r\cap\Omega}
A(\nabla u\cdot x)^2\right.\\&\quad
+r^{1-n}\int_{B_r\cap\partial\Omega}
\Big( 2f\,\nabla u\cdot x-A|\nabla u|^2 x\cdot\nu\Big)\\&\quad
-(2-n)r^{1-n}\int_{B_r\cap\partial\Omega} fu
-r^{2-n}\int_{\partial B_r\cap\partial\Omega} fu
\\&\quad\left.
+(2-n)r^{1-n}\int_{B_r\cap\Omega}gu
+r^{2-n}\int_{\partial B_r\cap\Omega}gu
\right]\\
&\quad
-r^{3-2n}\left(\int_{\partial B_r\cap\Omega} \nabla A\cdot \nu\,u^2\right)
\left(\int_{\partial B_r\cap\Omega} Au\nabla u\cdot\nu\right)
-2r^{3-2n}\left(\int_{\partial B_r\cap\Omega} Au\nabla u\cdot\nu\right)^2 .
\end{split}
\end{equation}
Moreover, from the Cauchy-Schwarz Inequality, we know that
$$ \int_{\partial B_r\cap\Omega} Au\nabla u\cdot x\le
\sqrt{\int_{\partial B_r\cap\Omega} Au^2
\int_{\partial B_r\cap\Omega} A(\nabla u\cdot x)^2}\,.$$
Consequently,
using again~\eqref{DeH DEF}, we also observe that
\begin{eqnarray*}
&& 2r^{-n} H(r)
\int_{\partial B_r\cap\Omega}
A(\nabla u\cdot x)^2-
2r^{3-2n}\left(\int_{\partial B_r\cap\Omega} Au\nabla u\cdot\nu\right)^2
\\ &=& 2r^{1-2n}\left[\left(\int_{\partial B_r\cap\Omega} A\,u^2\right)
\left(\int_{\partial B_r\cap\Omega} A(\nabla u\cdot x)^2\right)
-\left(\int_{\partial B_r\cap\Omega} Au\nabla u\cdot x\right)^2\right]\\&\ge&0.
\end{eqnarray*}
Plugging this information into~\eqref{PLUAQ}, we thus obtain that
\begin{equation}\label{0102303-A}
\begin{split}&
D'(r)H(r)-H'(r)D(r)\\
\ge\;& 
H(r)\left[
r^{1-n}\int_{B_r\cap\Omega} \Big(
|\nabla u|^2\nabla A\cdot x-2
(\nabla u\cdot x)g
\Big)
\right.\\&\quad
+r^{1-n}\int_{B_r\cap\partial\Omega}
\Big( 2f\,\nabla u\cdot x-A|\nabla u|^2 x\cdot\nu\Big)\\&\quad
-(2-n)r^{1-n}\int_{B_r\cap\partial\Omega} fu
-r^{2-n}\int_{\partial B_r\cap\partial\Omega} fu
\\&\quad\left.
+(2-n)r^{1-n}\int_{B_r\cap\Omega}gu
+r^{2-n}\int_{\partial B_r\cap\Omega}gu
\right]\\
&\quad
-r^{3-2n}\left(\int_{\partial B_r\cap\Omega} \nabla A\cdot \nu\,u^2\right)
\left(\int_{\partial B_r\cap\Omega} Au\nabla u\cdot\nu\right).
\end{split}
\end{equation}
Then, from~\eqref{0102303-A} and~\eqref{0102303}, we obtain that
\begin{equation}\label{FONDA}
\begin{split}&
D'(r)H(r)-H'(r)D(r)\\
\ge\;& 
H(r)\left[
r^{1-n}\int_{B_r\cap\Omega} \Big(
|\nabla u|^2\nabla A\cdot x-2
(\nabla u\cdot x)g
\Big)
\right.\\&\quad
+2r^{1-n}\int_{B_r\cap\partial\Omega}f\,\nabla u\cdot x
-(2-n)r^{1-n}\int_{B_r\cap\partial\Omega} fu
-r^{2-n}\int_{\partial B_r\cap\partial\Omega} fu
\\&\quad\left.
+(2-n)r^{1-n}\int_{B_r\cap\Omega}gu
+r^{2-n}\int_{\partial B_r\cap\Omega}gu\right]\\
&\quad
-r^{3-2n}\left(\int_{\partial B_r\cap\Omega} \nabla A\cdot \nu\,u^2\right)
\left(\int_{\partial B_r\cap\Omega} Au\nabla u\cdot\nu\right).\end{split}
\end{equation}
Now, we define
\begin{equation}\label{DEE} E(r):=r^{2-n}
\int_{B_r\cap\Omega} A|\nabla u|^2.\end{equation}
By~\eqref{STR:HY2},
we have that
\begin{equation}\label{9od7-A}
\left| \int_{B_r\cap\partial\Omega} fu\right|\le C
\int_{B_r\cap\partial\Omega} A\,|x|^{\delta-1}\,|u|^2.\end{equation}
On the other hand, by
Lemma~\ref{TRACEIN1} (used here with~$\gamma:=1-\delta$),
we see that
$$ \int_{B_r\cap\partial\Omega} A\,|x|^{\delta-1}\,|u|^2\le C
\,\int_{\Omega\cap B_r}\left[
r^\delta \,A\,|\nabla u|^2
+\frac{A\,u^2}{|x|^{2-\delta}}\right].$$
Hence, in view of Corollary \ref{cor:dis} (used here with~$\mu:=2-\delta$), \eqref{DeH DEF}
and~\eqref{DEE}
\begin{equation}\label{tobepFAL}\begin{split}
\int_{B_r\cap\partial\Omega} A\,|x|^{\delta-1}\,|u|^2\,&\le
C
\,r^\delta\int_{\Omega\cap B_r}
A\,|\nabla u|^2
+\frac{C}{r^{1-\delta}}\int_{\partial B_r\cap\Omega}Au^2
\\ &\le
C\, r^{n-2+\delta} \big(H(r)+E(r)\big).
\end{split}\end{equation}
Therefore, in light of \eqref{9od7-A}
\begin{equation}\label{tobep}
\left| \int_{B_r\cap\partial\Omega} fu\right|
\le C\, r^{n-2+\delta} \big(H(r)+E(r)\big).\end{equation}
Also, by \eqref{g:grow} and
Corollary \ref{cor:dis} (used here with~$\mu:=2-\delta$),
\begin{equation}\label{tobepg}\begin{split} &
\int_{B_r\cap \Omega} |g|\,|u|\le
C \int_{B_r\cap \Omega} A|x|^{\delta-2} \,|u|^2
\\&\qquad\qquad\le
C
\,r^\delta\int_{\Omega\cap B_r}
A\,|\nabla u|^2
+\frac{C}{r^{1-\delta}}\int_{\partial B_r\cap\Omega}Au^2
\le
C\, r^{n-2+\delta} \big(H(r)+E(r)\big)
.\end{split}\end{equation}
Consequently, by~\eqref{tobep} and~\eqref{tobepg}
\begin{equation*}
E(r)-D(r)\le
|D(r)-E(r)|\le
r^{2-n}\left|\int_{B_r\cap\partial\Omega} fu-\int_{B_r\cap \Omega} gu
\right|\le
C\, r^\delta \big(H(r)+E(r)\big),
\end{equation*}
and therefore, for any~$r\in(0,1)$ sufficiently small,
\begin{equation}\label{E vs D}
\frac{E(r)}2\le (1-Cr^\delta)\,E(r)\le Cr^\delta H(r)+D(r).
\end{equation}
Estimate \eqref{E vs D} implies statement (i) with $r_0>0$
so small as to satisfy condition \eqref{E vs D} and $Cr_0^\delta<1$. Indeed, let us argue by
contradiction and assume that there exists $\bar r\in
(0,r_0)$ such that $H(\bar r)=0$. By \eqref{DeH DEF} this would
imply that $u\equiv 0$ on $\Omega\cap\partial B_{\bar r}$ and hence,
in view of \eqref{INHPRIMO}, $D(\bar r)=0$. Then \eqref{E vs D} yields
that $E(\bar r)=0$ and hence $u$ is constant in $\Omega\cap  B_{\bar
  r}$. Therefore $u\equiv 0$ in $\Omega\cap  B_{\bar
  r}$, which is in contradiction with \eqref{NONTRIVIAL}.
  
Furthermore, for all $r\in(0,r_0)$, \eqref{E vs D} implies that 
\begin{equation}\label{Epci}
0\leq
\frac{E(r)}2<H(r)+D(r),\end{equation} and hence $\mathcal N(r)+1>0$.

Moreover, from the
Sobolev Trace Theorem on manifolds applied on the spherical cap
$\partial B_r\cap\partial\Omega=r\partial\Sigma$, 
we have that, recalling \eqref{MUCK},
\begin{equation}\label{firebj}
\begin{split} 
\int_{\partial B_r\cap\partial\Omega} A\,|u|^2&\le
\frac1c\int_{\partial B_r\cap\partial\Omega} |u|^2=\frac{r^{n-2}}c\int_{\partial \Sigma} |u(r\theta)|^2\\
&\leq 
Cr^{n-2}\int_{\Sigma}\left(u^2(r\theta)+|\nabla(u^2(r\theta)|\right)\\
&\leq 
Cr^{n-2}\int_{\Sigma}\left(u^2(r\theta)+2r|u(r\theta)||\nabla u(r\theta)|\right)\\
&\leq
Cr^{n-2}H(r)+2Cr^{n-1}\sqrt{\int_{\Sigma}u^2(r\theta)}\sqrt{\int_{\Sigma}|\nabla
  u(r\theta)|^2}
\\
&\leq 
Cr^{n-2}H(r)+Cr^{n-\frac32}\sqrt{H(r)}\sqrt{r^{2-n}\int_{\Omega\cap \partial B_r}A |\nabla u|^2}
\\&\le
C\left(r\int_{\partial B_r\cap\Omega} 
A\,|\nabla u|^2+r^{n-2} H(r)\right)
\end{split}\end{equation}
for some $C>0$ independent of $r$ (varying from line to line).
Now, we recall~\eqref{STR:HY2} 
and we observe that
\begin{equation}\label{AQW-1}
\left|
\int_{\partial B_r\cap\partial\Omega}fu\right|\le C
\int_{\partial B_r\cap\partial\Omega}A\,|x|^{\delta-1}\,|u|^2=
Cr^{\delta-1}\int_{\partial B_r\cap\partial\Omega}A\,|u|^2
.\end{equation}
In addition, from~\eqref{g:grow},
\begin{equation}\label{AQW-2}
\left| \int_{\partial B_r\cap \Omega}gu\right|\le
C \int_{\partial B_r\cap \Omega}A\,|x|^{\delta-2}|u|^2=
C r^{\delta-2}\int_{\partial B_r\cap \Omega}A\,|u|^2=C r^{n+\delta-3}H(r).
\end{equation}
{F}rom~\eqref{DPRIMO}, \eqref{AQW-1} and~\eqref{AQW-2}, we obtain that 
\begin{equation*}
\begin{split}
  r^{2-n}\int_{\partial B_r\cap\Omega}A|\nabla
  u|^2&=D'(r)+\frac{n-2}{r}D(r)+r^{2-n}\int_{\partial
  B_r\cap\partial\Omega}fu
  -r^{2-n}\int_{\partial B_r\cap \Omega}gu
  \\
&\leq 
D'(r)+\frac{n-2}{r}D(r)+C\left(
r^{\delta+1-n}\int_{\partial B_r\cap\partial\Omega}A\,| u|^2+r^{\delta-1} H(r)\right)
.\end{split}\end{equation*}
Then from \eqref{firebj} it follows that 
\begin{equation*}
  r^{2-n}\int_{\partial B_r\cap\Omega}A|\nabla
  u|^2\leq 
D'(r)+\frac{n-2}{r}D(r)+C r^{\delta-1} H(r)+
C
r^{\delta+2-n}\int_{\partial B_r\cap\Omega}A\,|\nabla u|^2,
\end{equation*}
from which it follows that 
\begin{equation}\label{lsnUA}
  r^{2-n}\int_{\partial B_r\cap\Omega}A|\nabla
  u|^2\leq 
C\left(D'(r)+\frac{n-2}{r}D(r)+C r^{\delta-1} H(r)\right),
\end{equation}
for some $C>0$ and for all $r>0$ sufficiently small.

Plugging~\eqref{lsnUA} into~\eqref{firebj} we conclude that
\begin{equation}\label{lsnUA2}
\begin{split} 
\int_{\partial B_r\cap\partial\Omega} A\,|u|^2&\le 
Cr^{n-2}H(r)+Cr^{n-\frac32}\sqrt{H(r)}\sqrt{r^{2-n}\int_{\Omega\cap \partial
    B_r}A |\nabla u|^2}\\
&\le Cr^{n-2}H(r)+Cr^{n-\frac32}\sqrt{H(r)}\sqrt{
D'(r)+\tfrac{n-2}{r}D(r)+C r^{\delta-1} H(r)}
\end{split}
\end{equation}
as long as~$r$ is sufficiently small.
It is now our goal to use the previously obtained
information in order to estimate the right hand side of~\eqref{FONDA}.
To this end, we first observe that, from \eqref{STR:HY1},
\begin{eqnarray*}&&
r^{3-2n}\,\left| \left(\int_{\partial B_r\cap\Omega} \nabla A\cdot \nu\,u^2\right)
\left(\int_{\partial B_r\cap\Omega} Au\nabla u\cdot\nu\right)\right|\\&
\le& \e_r\,r^{2-2n}\,\left|\left( \int_{\partial B_r\cap\Omega} A\,u^2\right)
\left(\int_{\partial B_r\cap\Omega} Au\nabla u\cdot\nu\right)\right|\\
&=&
\e_r\,r^{1-n}\,H(r)\,\left|
\int_{\partial B_r\cap\Omega} Au\nabla u\cdot\nu\right|\\&\le&
\e_r\,r^{1-n}\,H(r)\,\sqrt{
\int_{\partial B_r\cap\Omega} A\,|u|^2}
\,\sqrt{
\int_{\partial B_r\cap\Omega} A\,|\nabla u|^2}\\&=&
\e_r\,r^{\frac{1-n}2}\,\big(H(r)\big)^{\frac32}
\,\sqrt{
\int_{\partial B_r\cap\Omega} A\,|\nabla u|^2}.
\end{eqnarray*}
This and~\eqref{lsnUA} lead to
\begin{equation}\label{UNT1}
\begin{split}
r^{3-2n}\,&\left|\left( \int_{\partial B_r\cap\Omega} \nabla A\cdot \nu\,u^2\right)
\left(\int_{\partial B_r\cap\Omega} Au\nabla u\cdot\nu\right)\right|
\\ &\le C
\e_r\,r^{-\frac{1}2}\,\big(H(r)\big)^{\frac32}
\,\sqrt{D'(r)+\tfrac{n-2}{r}D(r)+C r^{\delta-1} H(r)}.
\end{split}\end{equation}
Furthermore, by~\eqref{STR:HY1} and~\eqref{g:grow},
\begin{eqnarray*}&&
r^{1-n}\,\left|
\int_{B_r\cap\Omega} \Big(
|\nabla u|^2\nabla A\cdot x-2
(\nabla u\cdot x)g
\Big)\right|\\&&\qquad\le Cr^{1-n}\,
\int_{B_r\cap\Omega} \Big(
\e_r \,A|\nabla u|^2+A\,|x|^{\delta-1}|\nabla u|\,|u|
\Big)\\&&\qquad
\le Cr^{1-n}\,\left(
\e_r\int_{B_r\cap\Omega}A\,|\nabla u|^2+
\sqrt{\int_{B_r\cap\Omega}
A\,|\nabla u|^2}\,\sqrt{\int_{B_r\cap\Omega}
A\,|x|^{2(\delta-1)}|u|^2}\right)\\
&&\qquad=
Cr^{1-n}\,\left(
\e_r r^{n-2}E(r)+r^{\frac{n-2}{2}}
\sqrt{E(r)}\,\sqrt{\int_{B_r\cap\Omega}
A\,|x|^{2(\delta-1)}|u|^2}\right).
\end{eqnarray*}
Consequently, exploiting Corollary \ref{cor:dis} with~$\mu:=2(1-\delta)$,
\begin{equation*}
\begin{split}&
r^{1-n}\,\left|
\int_{B_r\cap\Omega} \Big(
|\nabla u|^2\nabla A\cdot x-2
(\nabla u\cdot x)g
\Big)\right|\\
\le\;&
Cr^{1-n}\,\left(
\e_r r^{n-2}E(r)+r^{\frac{n-2}{2}}
\sqrt{E(r)}\,\sqrt{
r^{2\delta-1}\int_{\partial B_r\cap\Omega}
A\,|u|^2+r^{2\delta}\int_{B_r\cap\Omega} A\,|\nabla u|^2}\right)
\\ \le\;&
C\,\left(\frac{\e_r}{r}\,E(r)+r^{\delta-1}
\sqrt{E(r)}\,\sqrt{H(r)+E(r)}\right).
\end{split}\end{equation*}
Now, plugging the latter inequality,
\eqref{tobep}, \eqref{tobepg}, \eqref{AQW-2} and~\eqref{UNT1}
into~\eqref{FONDA}, we conclude that
\begin{equation}\label{FOND567ABIS}
\begin{split}&
D'(r)H(r)-H'(r)D(r)\\
\ge\;&
H(r)\Bigg[
-C\,\left(\frac{\e_r}{r}\,E(r)+r^{\delta-1}
\sqrt{E(r)}\,\sqrt{H(r)+E(r)}\right)
\\&\quad
+2r^{1-n}\int_{B_r\cap\partial\Omega}f\,\nabla u\cdot x
-C\, r^{\delta-1} \big(H(r)+E(r)\big)
-r^{2-n}\int_{\partial B_r\cap\partial\Omega} fu
\\&\quad
-C\, r^{\delta-1} \big(H(r)+E(r)\big)
-C r^{\delta-1}H(r)\Bigg]\\
&\quad
-C
\e_r\,r^{-\frac{1}2}\,\big(H(r)\big)^{\frac32}
\,\sqrt{D'(r)+\tfrac{n-2}{r}D(r)+C r^{\delta-1} H(r)}\\
\ge\;&
H(r)\Bigg[
-C\,\left(\frac{\e_r}{r}\,E(r)+r^{\delta-1}
\sqrt{E(r)}\,\sqrt{H(r)+E(r)}\right)-C\, r^{\delta-1} \big(H(r)+E(r)\big)
\\&\quad
+2r^{1-n}\int_{B_r\cap\partial\Omega}f\,\nabla u\cdot x
-r^{2-n}\int_{\partial B_r\cap\partial\Omega} fu\Bigg]\\
&\quad
-C
\e_r\,r^{-\frac{1}2}\,\big(H(r)\big)^{\frac32}
\,\sqrt{D'(r)+\tfrac{n-2}{r}D(r)+C r^{\delta-1} H(r)},
\end{split}
\end{equation}
for some~$C>0$.

Now, recalling~\eqref{AQW-1} and~\eqref{lsnUA2}, we notice that
\begin{equation*}
\left|
\int_{\partial B_r\cap\partial\Omega}fu\right|\le 
C\,\Big(
r^{n+\delta-3} H(r)+
r^{n+\delta-\frac52}\sqrt{H(r)}\sqrt{
D'(r)+\tfrac{n-2}{r}D(r)+C r^{\delta-1} H(r)}
\Big).
\end{equation*}
This and~\eqref{FOND567ABIS} give that
\begin{equation}\label{FONDABIS}
\begin{split}&
D'(r)H(r)-H'(r)D(r)\\
\ge\;&
H(r)\Bigg[
-C\,\left(\frac{\e_r}{r}\,E(r)+r^{\delta-1}
\sqrt{E(r)}\,\sqrt{H(r)+E(r)}\right)-C\, r^{\delta-1} \big(H(r)+E(r)\big)
\\&\quad
+2r^{1-n}\int_{B_r\cap\partial\Omega}f\,\nabla u\cdot x
-C\, r^{\delta-1} H(r)-C
r^{\delta-\frac12}\sqrt{H(r)}\sqrt{
D'(r)+\tfrac{n-2}{r}D(r)+C r^{\delta-1} H(r)}\Bigg]\\
&\quad
-C
\e_r\,r^{-\frac{1}2}\,\big(H(r)\big)^{\frac32}
\,\sqrt{D'(r)+\tfrac{n-2}{r}D(r)+C r^{\delta-1} H(r)}\\
\ge\;& 
H(r)\Bigg[
-C\,\left(\frac{\e_r}{r}\,E(r)+r^{\delta-1}
\sqrt{E(r)}\,\sqrt{H(r)+E(r)}\right)-C\, r^{\delta-1} \big(H(r)+E(r)\big)\\
&\qquad+2r^{1-n}\int_{B_r\cap\partial\Omega}f\,\nabla u\cdot x
\Bigg]\\
&\quad
-C\max\{r^\delta,
\e_r\}\,r^{-\frac{1}2}\,\big(H(r)\big)^{\frac32}
\,\sqrt{D'(r)+\tfrac{n-2}{r}D(r)+C r^{\delta-1} H(r)}.
\end{split}
\end{equation}
Now, we denote by~$\partial_\star:=\frac{x}{|x|}\cdot\nabla$
and we observe that~$\partial_\star$ is the ``radial'' component of the tangential
gradient along~$\partial\Omega$, since~$\Omega$ is a cone.
Hence, since, by~\eqref{HYhstar},
$$ \nabla \big( F(x,u(x))\big)=\nabla\left( \int_0^{u(x)} f(x,\tau)\,d\tau\right)=
\int_0^{u(x)} \nabla_x f(x,\tau)\,d\tau+ f(x,u(x))\nabla u(x),$$
we obtain that
$$ |x|\,\partial_\star \big( F(x,u(x))\big)=
\int_0^{u(x)} \nabla_x f(x,\tau)\cdot x\,d\tau+ f(x,u(x))\nabla u(x)\cdot x.$$
As a consequence, by~\eqref{STR:HY2DER},
\begin{equation}\label{78JS34756}
\begin{split}
f(x,u(x))\nabla u(x)\cdot x \;&\le\;
|x|\,\partial_\star \big( F(x,u(x))\big)+
C\int_0^{|u(x)|} A(x)\,|x|^{\delta-1}\,\tau\,d\tau
\\&\le\;
|x|\,\partial_\star \big( F(x,u(x))\big)+ 
C\,A(x)\,|x|^{\delta-1}\,|u(x)|^2.
\end{split}\end{equation}
Moreover, integrating by parts along~$\partial\Omega$,
\begin{equation}\label{76744823}\left|
\int_{B_r\cap\partial\Omega} |x|\,\partial_\star \big( F(x,u(x))\big)\right|
\le C\int_{B_r\cap\partial\Omega} \big| F(x,u(x))\big|+
C\int_{\partial(B_r\cap\partial\Omega)} |x|\,\big| F(x,u(x)\big|.
\end{equation}
In addition, by~\eqref{STR:HY2} and~\eqref{HYhstar},
we know that
$$ |F(x,t)|\le C\,A(x)\,|x|^{\delta-1}\,\int_0^{|t|} \tau\,d\tau\le
C\,A(x)\,|x|^{\delta-1}\,|t|^2.
$$
This and~\eqref{76744823} lead to
$$
\left|
\int_{B_r\cap\partial\Omega} |x|\,\partial_\star \big( F(x,u(x))\big)\right|
\le C\int_{B_r\cap\partial\Omega} A(x)\,|x|^{\delta-1}\,|u(x)|^2+
C\int_{\partial B_r\cap\partial\Omega}
A(x)\,|x|^{\delta}\,|u(x)|^2
.$$
Hence, recalling~\eqref{78JS34756},
\begin{equation*}
\begin{split}\left|
\int_{B_r\cap\partial\Omega} f(x,u(x))\nabla u(x)\cdot x \right|\;&\le\;\left|
\int_{B_r\cap\partial\Omega}
|x|\,\partial_\star \big( F(x,u(x))\big)\right|+ C\int_{B_r\cap\partial\Omega}
A(x)\,|x|^{\delta-1}\,|u(x)|^2\\ &\le\;
C\int_{B_r\cap\partial\Omega} A(x)\,|x|^{\delta-1}\,|u(x)|^2+
C\int_{\partial B_r\cap\partial\Omega}
A(x)\,|x|^{\delta}\,|u(x)|^2,
\end{split}
\end{equation*}
up to renaming~$C>0$.

Therefore, recalling~\eqref{tobepFAL} and~\eqref{lsnUA2},
\begin{multline*}
\left|
\int_{B_r\cap\partial\Omega} f(x,u(x))\nabla u(x)\cdot x \right|\\\le
C\, r^{n-2+\delta} \Big(H(r)+E(r)\Big)+
Cr^{n+\delta-\frac32}\sqrt{H(r)}\sqrt{
D'(r)+\tfrac{n-2}{r}D(r)+C r^{\delta-1} H(r)}
.
\end{multline*}
Then, we insert this information into~\eqref{FONDABIS}
and we conclude that
\begin{equation*}
\begin{split}
D'(r)&H(r)-H'(r)D(r)\\
\ge\;&
H(r)\Bigg[
-C\,\left(\frac{\e_r}{r}\,E(r)+r^{\delta-1}
\sqrt{E(r)}\,\sqrt{H(r)+E(r)}\right)-C\, r^{\delta-1} \big(H(r)+E(r)\big)\\
&\qquad
-C\, r^{\delta-1} \big(H(r)+E(r)\big)
-Cr^{\delta-\frac12}\sqrt{H(r)}\sqrt{
D'(r)+\tfrac{n-2}{r}D(r)+C r^{\delta-1} H(r)}
\Bigg]\\
&\quad
-C\max\{r^\delta,
\e_r\}\,r^{-\frac{1}2}\,\big(H(r)\big)^{\frac32}
\,\sqrt{D'(r)+\tfrac{n-2}{r}D(r)+C r^{\delta-1} H(r)}\\
\ge\;&
H(r)\Bigg[
-C\,\left(\frac{\e_r}{r}\,E(r)+r^{\delta-1}
\sqrt{E(r)}\,\sqrt{H(r)+E(r)}\right)-C\, r^{\delta-1} \big(H(r)+E(r)\big) \Bigg]\\
&\quad
-C\max\{r^\delta,
\e_r\}\,r^{-\frac{1}2}\,\big(H(r)\big)^{\frac32}
\,\sqrt{D'(r)+\tfrac{n-2}{r}D(r)+C r^{\delta-1} H(r)}.
\end{split}
\end{equation*}
Accordingly, by~\eqref{DeH DEN},
\begin{equation*}\begin{split}
{\mathcal{N}}'(r)\,&
=
\frac{d}{dr}\left(\frac{D(r)}{H(r)}\right)=
\frac{ D'(r)H(r)-H'(r)D(r) }{H^2(r)}\\&
\ge
-C\,\left(\frac{\e_r}{r}\,\frac{E(r)}{H(r)}+r^{\delta-1}
\sqrt{\frac{E(r)}{H(r)}}\,\sqrt{1+\frac{E(r)}{H(r)}}\right)-C\, r^{\delta-1} \left(
1+\frac{E(r)}{H(r)}\right)
\\&\quad
-C\max\{r^\delta,
\e_r\}\,r^{-\frac{1}2}
\,\sqrt{\frac{D'(r)}{H(r)}+\frac{n-2}{r}\frac{D(r)}{H(r)}+C r^{\delta-1}}.
\end{split}
\end{equation*}
{F}rom this inequality and~\eqref{E vs D} we find that
\begin{equation}\label{eq:2}
  \begin{split}
    {\mathcal{N}}'(r)
    &\ge
    -C\,\left(\frac{\e_r}{r}\,(1+\mathcal N(r))+r^{\delta-1}
      \sqrt{1+\mathcal N(r)}\,\sqrt{2+\mathcal N(r)}\right)-C\, r^{\delta-1} \left(
      2+\mathcal N(r)\right)
    \\&\quad
    -C\max\{r^\delta,
    \e_r\}\,r^{-\frac{1}2}
    \,\sqrt{\frac{D'(r)}{H(r)}+\frac{n-2}{r}\frac{D(r)}{H(r)}+C
      r^{\delta-1}}\\
    &\ge
    -C\,\frac{\e_r}r (1+\mathcal N(r)) -C\,r^{\delta-1}(1+\mathcal N(r)) -C\,r^{\delta-1}\\
    & \quad-C\,\max\{r^\delta,\e_r\}r^{-\frac 12}\sqrt{\frac{D'(r)}{H(r)}+\frac{n-2}{r}\frac{D(r)}{H(r)}+C
      r^{\delta-1}}\\
    & \ge
    -C\,\max\{r^\delta,\e_r\}r^{-1}\left[(2+\mathcal N(r))+
      \sqrt{r\frac{D'(r)}{H(r)}+(n-2)\frac{D(r)}{H(r)}+C
        r^{\delta}}\right].
  \end{split}
\end{equation}
Let 
\begin{equation*}
\Lambda=\{r\in(0,r_0):D'(r)H(r)\le D(r)H'(r)\}.
\end{equation*}
In view of \eqref{DNUOVA}, \eqref{eq:H-N+1-pos}, and \eqref{STR:HY1}, for $r\in\Lambda$ we can estimate
$D'(r)$ as follows:
\begin{align*}
  D'(r)&\leq \frac{D(r)H'(r)}{H(r)}=\frac{2}r\frac{D^2(r)}{H(r)}+
r^{1-n}(\mathcal N(r)+1)\int_{\partial B_r\cap\Omega} \nabla A\cdot \nu\,u^2 -
r^{1-n}\int_{\partial B_r\cap\Omega} \nabla A\cdot \nu\,u^2\\
&\leq \frac{2}r\frac{D^2(r)}{H(r)}+
(\mathcal N(r)+1)\frac{\e_r}rH(r)+\frac{\e_r}rH(r)=
\frac{2}r\frac{D^2(r)}{H(r)}+
(\mathcal N(r)+2)\frac{\e_r}rH(r).
\end{align*}
It follows that, for all $r\in\Lambda$,
\begin{align*}
 \sqrt{r\frac{D'(r)}{H(r)}+(n-2)\frac{D(r)}{H(r)}+C
    r^{\delta}}&\leq
\sqrt{2\mathcal N^2(r)+\e_r (\mathcal N(r)+2)+(n-2)\mathcal N(r) +C
    r^{\delta}} \\
&\leq C (\mathcal N(r)+2).
\end{align*}
Combining the previous estimate with \eqref{eq:2} we obtain that, for
all $r\in\Lambda$ sufficiently small
\begin{equation}\label{eq:3}
    {\mathcal{N}}'(r)\geq -C\,\max\{r^\delta,\e_r\}r^{-1}(2+\mathcal N(r)).
\end{equation}
For $r\not\in \Lambda$ estimate \eqref{eq:3} is trivial, since the
left hand side of \eqref{eq:3} is nonnegative outside $\Lambda$ whereas the
right hand side is nonpositive because of \eqref{eq:H-N+1-pos}.
 Estimate \eqref{eq:stima-N'} and statement (ii) are thereby proved.

To prove statement (iii), let $h(r):=\max\{r^\delta,\e_r\}r^{-1}$. By
assumption~\eqref{L1}, we have that $h\in L^1(0,r_1)$. Then, from \eqref{eq:stima-N'}
it follows that 
\begin{equation*}
  \left((2+\mathcal N(r))e^{-C_1\int_r^1 h(s)\,ds}\right)'=e^{-C_1\int_r^1 h(s)\,ds}\Big({\mathcal{N}}'(r)+C_1h(r) (2+\mathcal N(r))\Big)\geq0
\end{equation*}
hence the function $w(r):= (2+\mathcal N(r))e^{-C_1\int_r^1
  h(s)\,ds}$ is nondecreasing in $(0,r_1)$.
  
Moreover $w\geq 0$ in view
of \eqref{eq:H-N+1-pos}. Therefore $w$  admits a finite limit as $r\to
0^+$ and then also $\mathcal N$ has a finite limit $\gamma$ as $r\to
0^+$.
Since estimate \eqref{E vs D} implies that $\mathcal N(r)\geq -C
r^\delta$ in $(0,r_0)$, we conclude that $\gamma\geq0$.

\section{Proof of Theorem~\ref{UQ:1}}\label{9SIoryuitis}

We start by proving~\eqref{UQ:3}. To this end, we argue for a contradiction
and we suppose that~\eqref{UQ:3} is violated. Then, we have that~\eqref{NONTRIVIAL}
is satisfied and hence all the hypotheses of Theorem~\ref{MONOTONIA} are
fulfilled. In particular, by the fact that the limit in
\eqref{LIMITgamma} is finite and $\mathcal N$ is continuous in $(0,r_0)$, we find that $\mathcal N$ is bounded,
i.e.  
for all~$r\in(0,r_0)$,
\begin{equation}\label{LAois}
{\mathcal{N}}(r)\le C,
\end{equation}
for some $C>0$.

Moreover, by~\eqref{DNUOVA},
$$ \frac{2D(r)}{r H(r)}=\frac{H'(r)}{H(r)}-
\frac{r^{1-n}}{H(r)}\int_{\partial B_r\cap\Omega} \nabla A\cdot \nu\,u^2.
$$
As a consequence, recalling~\eqref{TANGE},
\begin{eqnarray*}
\frac{H'(r)}{H(r)}&=&\frac{2D(r)}{r H(r)}+
\frac{r^{1-n}}{H(r)}\int_{\partial B_r\cap\Omega} \nabla A\cdot \nu\,u^2\\
&=&
\frac{2{\mathcal{N}}(r)}{r}+
\frac{r^{1-n}}{H(r)}\int_{\partial B_r\cap\Omega} \nabla A\cdot \nu\,u^2\\
&\le&
\frac{2{\mathcal{N}}(r)}{r}+
C\,\frac{r^{1-n}}{r H(r)}\int_{\partial B_r\cap\Omega} A\,u^2\\
&=&
\frac{2{\mathcal{N}}(r)}{r}+
\frac{C}{r}\\&\le&
\frac Cr\big( {\mathcal{N}}(r)+1\big),
\end{eqnarray*}
for some $C>0$ independent of $r$ (varying from line to line).
This and~\eqref{LAois} yield that
\begin{equation}\label{eq:H'suH}
\frac{H'(r)}{H(r)}\le \frac{C}{r},
\end{equation}
up to renaming~$C>0$ and therefore, if~$r\in(0,r_0/2)$,
\begin{equation}\label{DOU}
\begin{split}
\frac{H(2r)}{H(r)}\,& = \exp\left( \log H(2r)-\log H(r)\right)\\
&=\exp \left( \int_r^{2r} \frac{H'(\rho)}{H(\rho)}\,d\rho\right)\\
&\le \exp \left( C\,\int_r^{2r} \frac{d\rho}\rho\right)\\&=C,
\end{split}
\end{equation}
up to renaming~$C$ line after line. More in general, integration of
\eqref{eq:H'suH} over the interval $(r,rR)$ yields that for every
$R>1$ there exists $C_R>0$ (depending on $R$ but independent of $r$) such that 
\begin{equation}\label{eq:doub_gen}
  H(Rr)\leq C_R H(r)\quad\text{for all }r\in(0,r_0/R).
\end{equation}

The inequality in~\eqref{DOU} provides a pivotal ``doubling property''
in our setting. {F}rom this, we obtain that
$$ \int_{\partial B_{2r}\cap\Omega} A(x)\,u^2(x)
\,d{\mathcal{H}}^{n-1}_x\le
C\,\int_{\partial B_r\cap\Omega} A(x)\,u^2(x)
\,d{\mathcal{H}}^{n-1}_x,$$
up to renaming~$C$.

Integrating the latter inequality in~$r$, we find that
$$ \int_{B_{2r}\cap\Omega} A(x)\,u^2(x)
\,dx\le
C_0\,\int_{ B_r\cap\Omega} A(x)\,u^2(x)
\,dx,$$
for some~$C_0>0$ independent of $r$, which gives that
\begin{equation}\label{Lam}
\int_{B_{r}\cap\Omega} A(x)\,u^2(x)
\,dx\le C_0^m \int_{B_{r/2^m}\cap\Omega} A(x)\,u^2(x)
\,dx,
\end{equation}
for all~$m\in\N$ and $r\in(0,r_0)$.

Now we fix~$k\in\N$ such that~$2^{2k}\ge 2C_0$. In light of~\eqref{UQ:2}
we can write that
$$ |u(x)|\le |x|^k,$$
as long as~$x\in\Omega$ and~$|x|$ is sufficiently small.
Hence, we can exploit~\eqref{Lam} for~$m$ sufficiently large and conclude that
\begin{eqnarray*}&&
\int_{B_{r_0}\cap\Omega} A(x)\,u^2(x)
\,dx\le C_0^m \int_{B_{r_0/2^m}\cap\Omega} A(x)\,|x|^{2k}
\,dx\\&&\qquad\le
C_0^m \,\left(\frac{r_0}{2^m}\right)^{2k}\int_{B_{r_0/2^m}\cap\Omega} A(x)\,dx
\le
\frac{r_0^{2k}}{2^m} \,\|A\|_{L^1(\Omega)}.
\end{eqnarray*}
Then, sending~$m\to+\infty$, we conclude that
$$ \int_{B_{r_0}\cap\Omega} A(x)\,u^2(x)
\,dx=0,$$
and therefore, by~\eqref{MUCK}, it follows that~$u$
must vanish necessarily in~$B_{r_0}\cap\Omega$.
This proves~\eqref{UQ:3}, against our initial contradictory assumption.

Having established~\eqref{UQ:3}, we can now complete the proof
of Theorem~\ref{UQ:1}, since, if~$A$ is Lipschitz, we can use~\eqref{UQ:3}
and the classical unique continuation principle in~\cite{MR882069}
and obtain~\eqref{UQ:4}, as desired.

\section{Proof of Theorem~\ref{BLOW-a}}\label{senrtoesyuiert03}

By~\eqref{MAIN EQ} and~\eqref{ulam}, we see that, if~$x\in\Omega$ and
$\lambda$ is sufficiently small,
\begin{equation}\label{Eq:la:1}
\begin{split}
0\,&= {\rm div}\,\big(A(\lambda x)\,\nabla u_\lambda (x)\big)-
\frac{\lambda^2}{\sqrt{H(\lambda)}}\,g(\lambda x,\sqrt{H(\lambda)}\,u_\lambda (x))
\\ &= {\rm div}\,\big(A_\lambda (x)\,\nabla u_\lambda (x)\big)-
g_\lambda (x,u_\lambda (x)),
\end{split}\end{equation}
where
\begin{eqnarray*}
A_\lambda (x)&:=&A(\lambda x)\\
{\mbox{and}}\qquad g_\lambda (x,t)&:=&
\frac{\lambda^2}{\sqrt{H(\lambda)}}\,g(\lambda x,\sqrt{H(\lambda)}\,t).
\end{eqnarray*}
Similarly, we see that, if~$x\in\partial\Omega$,
\begin{equation}\label{Eq:la:2}
\begin{split}
0\,&= \frac{\lambda}{\sqrt{H(\lambda)}}\,\Big(
A(\lambda x)\nabla u(\lambda x)\cdot \nu(\lambda x)-f(\lambda x,u(\lambda x))\Big)\\
&=
A_\lambda( x)\nabla u_\lambda( x)\cdot \nu(x)-
\frac{\lambda}{\sqrt{H(\lambda)}}\,f(\lambda x,\sqrt{H(\lambda)}\,u_\lambda (x))\\
&= 
A_\lambda( x)\nabla u_\lambda( x)\cdot \nu(x)-f_\lambda(x,u_\lambda(x)),
\end{split}\end{equation}
where
$$ f_\lambda(x,t):=
\frac{\lambda}{\sqrt{H(\lambda)}}\,f(\lambda x,\sqrt{H(\lambda)}\,t).$$
Now, in the notation of~\eqref{DeH DEF}, we write~$D_{u,A,f,g}$
and~$H_{u,A}$ to emphasize their dependences. In the same way,
in the notation of~\eqref{DeH DEN}, we write~${\mathcal{N}}_{u,A,f,g}$.
For short, we drop the indexes when they refer to the original
configuration in~\eqref{MAIN EQ} and we write
\begin{equation}\label{notazz}
D_\lambda:=D_{u_\lambda,A_\lambda,f_\lambda,g_\lambda},\qquad
H_\lambda:=
H_{u_\lambda,A_\lambda}
\qquad{\mbox{and}}\qquad
{\mathcal{N}}_\lambda:=
{\mathcal{N}}_{u_\lambda,A_\lambda,f_\lambda,g_\lambda}.\end{equation}
We remark that
\begin{eqnarray*}
H_\lambda(r)&=&
r^{1-n}\int_{\partial B_r\cap\Omega} A_\lambda(x)\,u^2_\lambda(x)
\,d{\mathcal{H}}^{n-1}_x\\&=&\frac{r^{1-n}}{H(\lambda)}
\,\int_{\partial B_r\cap\Omega} A(\lambda x)\,u^2(\lambda x)
\,d{\mathcal{H}}^{n-1}_x\\&=&
\frac{( \lambda r)^{1-n}}{H(\lambda)}
\,\int_{\partial B_{\lambda r}\cap\Omega} A(y)\,u^2(y)
\,d{\mathcal{H}}^{n-1}_y\\
&=&\frac{H(\lambda r)}{H(\lambda)}.
\end{eqnarray*}
In addition,
\begin{eqnarray*}
D_\lambda(r)&=&
r^{2-n}\int_{B_r\cap\Omega} A_\lambda(x)\,|\nabla u_\lambda(x)|^2\,dx
-r^{2-n}\int_{B_r\cap\partial\Omega} f_\lambda(x,u_\lambda(x))\,u_\lambda(x)\,d{\mathcal{H}}^{n-1}_x
\\&&\qquad\qquad
+r^{2-n}\int_{B_r\cap \Omega} g_\lambda(x,u_\lambda(x))\,u_\lambda(x)\,dx\\
&=&\frac{\lambda^2 r^{2-n}}{H(\lambda)}
\int_{B_r\cap\Omega} A(\lambda x)\,|\nabla u(\lambda x)|^2\,dx
-
\frac{\lambda r^{2-n}}{{H(\lambda)}}\,
\int_{B_r\cap\partial\Omega} 
f(\lambda x,u(\lambda x))
\,u(\lambda x)\,d{\mathcal{H}}^{n-1}_x
\\&&\qquad\qquad
+\frac{\lambda^2 r^{2-n}}{{H(\lambda)}}\,\int_{B_r\cap \Omega} 
g(\lambda x, u(\lambda x))\,u(\lambda x)\,dx\\
&=&\frac{(\lambda r)^{2-n}}{H(\lambda)}
\int_{B_{\lambda r}\cap\Omega} A(y)\,|\nabla u(y)|^2\,dy
-
\frac{(\lambda r)^{2-n}}{{H(\lambda)}}\,
\int_{B_{\lambda r}\cap\partial\Omega} 
f(y,u(y))
\,u(y)\,d{\mathcal{H}}^{n-1}_y
\\&&\qquad\qquad
+\frac{(\lambda r)^{2-n}}{{H(\lambda)}}\,\int_{B_{\lambda r}\cap \Omega} 
g(y, u(y))\,u(y)\,dy\\
&=&\frac{D(\lambda r)}{H(\lambda)},
\end{eqnarray*}
and therefore
\begin{equation}\label{78dh2eiyqwhdhffggfg}
{\mathcal{N}}_\lambda(r)=\frac{D(\lambda r)}{H(\lambda r)}=
{\mathcal{N}}(\lambda r).
\end{equation}
This and~\eqref{LIMITgamma} give that, for all $r>0$,
\begin{equation*}
\lim_{\lambda\searrow0}{\mathcal{N}}_\lambda(r)=
\gamma,
\end{equation*}
for some finite $\gamma\geq 0$.

Now we claim that, for all $R>0$ and $\lambda\in (0,r_0/R)$,
\begin{equation}\label{H1bound}
\| u_\lambda\|_{H^1(\Omega\cap B_R)}\le C_R,
\end{equation}
for some~$C_R>0$ (eventually depending on $R$). To this end, we
exploit~\eqref{DEE}, \eqref{Epci}, \eqref{eq:doub_gen},
and~\eqref{LAois} to see that, for all $\lambda\in(0,r_0/R)$,
\begin{equation}\label{9iuj52goq}
\begin{split}&
\int_{\Omega\cap B_R} A_\lambda (x)\,|\nabla u_\lambda (x)|^2\,dx=
\frac{\lambda^{2}}{H(\lambda)}
\int_{\Omega\cap B_R} A(\lambda x)\,|\nabla u(\lambda x)|^2\,dx\\&\qquad=
\frac{\lambda^{2-n}}{H(\lambda)}
\int_{\Omega\cap B_{R\lambda}} A(y)\,|\nabla u(y)|^2\,dy=
R^{n-2}\frac{E(\lambda R)}{H(\lambda)}\le 2R^{n-2}\,\frac{H(\lambda
  R)+D(\lambda R)}{H(\lambda)}\\&\qquad=
2R^{n-2}\,\frac{H(\lambda
  R)}{H(\lambda)}
(1+{\mathcal{N}}(\lambda R))\le C_R,\end{split}
\end{equation}
for some~$C_R>0$ depending on $R$.

Moreover, using again \eqref{eq:doub_gen}, we observe that
\begin{equation}\label{usia1}\begin{split}& \int_{\partial B_R\cap\Omega}
A_\lambda(x)\,u^2_\lambda(x)\,d{\mathcal{H}}^{n-1}_x=
\frac{1}{H(\lambda)}
\int_{\partial B_R\cap\Omega}
A(\lambda x)\,u^2(\lambda x)\,d{\mathcal{H}}^{n-1}_x\\&\qquad=
\frac{\lambda^{1-n}}{H(\lambda)}
\int_{\partial B_{R\lambda}\cap\Omega}
A(y)\,u^2(y)\,d{\mathcal{H}}^{n-1}_y=R^{n-1}\frac{H(R\lambda)}{H(\lambda)}\leq
C_R,
\end{split}\end{equation}
up to renaming $C_R$. Hence, recalling Corollary \ref{cor:dis} (used here with~$\mu:=0$, $r:=R$,
and on the function~$u_\lambda$ and with weight~$A_\lambda$)
and~\eqref{9iuj52goq},
\begin{equation}\label{8uhscUSU}
\begin{split}
\int_{\Omega\cap B_R} A_\lambda(x)\,u^2_\lambda(x)\,dx\,&
\leq C_R\,\left( \int_{\partial B_R\cap
  \Omega}A_\lambda(x)\,u_\lambda^2(x)\,d{\mathcal{H}}^{n-1}_x+
  \int_{\Omega\cap B_R} A_\lambda(x)|\nabla u_\lambda(x)|^2 \,dx\right)
\\ &\le C_R,
\end{split}\end{equation}
up to renaming~$C_R$. This inequality and~\eqref{9iuj52goq}, combined with~\eqref{MUCK},
give~\eqref{H1bound}, as desired.

Now, from \eqref{H1bound} and a diagonal process, we deduce that,
along  a subsequence,
$u_\lambda$ converges a.e. in~$\Omega$, strongly in~$L^2(\Omega\cap B_R)$
and weakly in~$H^1(\Omega\cap B_R)$ for all $R>0$, as $\lambda\searrow0$.
Consistently with the notation in Theorem~\ref{BLOW-a},
we denote by~$\tilde u$ this limit; we observe that $\tilde u\in \bigcap_{R>0}
H^1(\Omega\cap B_R)$.

As a particular case of \eqref{usia1} with $R=1$ we have that 
\[
 \int_{\partial B_1\cap\Omega}
A_\lambda(x)\,u^2_\lambda(x)\,d{\mathcal{H}}^{n-1}_x=1
\]
which, in view of the compactness of the trace embedding
$H^1(\Omega\cap B_1)\hookrightarrow \hookrightarrow L^2(\Omega\cap \partial B_1)$, implies that 
\begin{equation}\label{eq:tilde-u-NB}
 \int_{\partial B_1\cap\Omega}
\tilde u^2(x)\,d{\mathcal{H}}^{n-1}_x=\lim_{\lambda\searrow0} \int_{\partial B_1\cap\Omega}
A_\lambda(x)\,u^2_\lambda(x)\,d{\mathcal{H}}^{n-1}_x=1.
\end{equation}
Hence $\tilde u\not\equiv 0$.

We observe that, by~\eqref{g:grow}, for every~$x\in B_1$,
\begin{equation}\label{sti:gla}\begin{split}&
| g_\lambda (x,u_\lambda (x))|
=\frac{\lambda^2}{\sqrt{H(\lambda)}}\,\big|g(\lambda x,\sqrt{H(\lambda)}\,
u_\lambda (x))\big|\\
&\qquad\le
C\,\lambda^\delta\,A(\lambda x)\,|x|^{\delta-2}\,|u_\lambda( x)|\\
&\qquad\le
C\,\lambda^\delta\,|x|^{\delta-2}\,|u_\lambda( x)|
,\end{split}
\end{equation}
up to renaming~$C$ line after line.

Moreover, by~\eqref{STR:HY2},
\begin{equation}\label{sti:fla}
\begin{split}&
|f_\lambda(x,u_\lambda(x))|=
\frac{\lambda}{\sqrt{H(\lambda)}}\,
\big| f(\lambda x,\sqrt{H(\lambda)}\,u_\lambda(x))\big|\\&\qquad
\le C\,\lambda^\delta\,A(\lambda x)\,|x|^{\delta-1}\,|u_\lambda( x)|\\
&\qquad\le
C\,\lambda^\delta\,|x|^{\delta-1}\,|u_\lambda( x)|.
\end{split}
\end{equation}
Now we claim that, for all $R>0$,
\begin{equation}\label{djj02856ssd}
\begin{split}&
\lim_{\lambda\searrow0} \int_{\Omega\cap B_R} g_\lambda(x,u_\lambda(x))\,u_\lambda(x)\,dx=0\\
{\mbox{and }}\quad&
\lim_{\lambda\searrow0} \int_{\partial\Omega\cap B_R} f_\lambda(x,u_\lambda(x))\,u_\lambda(x)\,d{\mathcal{H}}^{n-1}_x=0.
\end{split}
\end{equation}
Indeed, using~\eqref{sti:gla}, Corollary \ref{cor:dis} (used here with~$A:=1$,
$r:=R$, and~$\mu:=2-\delta$), and \eqref{MUCK}, we see that
\begin{eqnarray*}
&& \left|\int_{\Omega\cap B_R} g_\lambda(x,u_\lambda(x))\,u_\lambda(x)\,dx\right|\le
C\,\lambda^\delta\,\int_{\Omega\cap B_R}|x|^{\delta-2}\,u^2_\lambda( x)\,dx\\&&\qquad\le
C_R\,\lambda^\delta\,\left( 
\int_{\partial B_R\cap\Omega} u^2_\lambda( x)\,d{\mathcal{H}}^{n-1}_x
+
\int_{\Omega\cap B_R}\,|\nabla u_\lambda( x)|^2\,dx
\right)
\\&&\qquad\le
C_R\,\lambda^\delta\,\left( 
\int_{\partial B_R\cap \Omega} A_\lambda(x)\,u^2_\lambda( x)\,d{\mathcal{H}}^{n-1}_x
+
\int_{\Omega\cap B_R}A_\lambda(x)\,|\nabla u_\lambda( x)|^2\,dx
\right).
\end{eqnarray*}
{F}rom this, \eqref{usia1} and~\eqref{9iuj52goq}, we deduce that
\begin{equation*}
\left|\int_{\Omega\cap B_1} g_\lambda(x,u_\lambda(x))\,u_\lambda(x)\,dx\right|\le
C_R\lambda^\delta,
\end{equation*}
up to renaming~$C_R>0$.

This proves the first claim in~\eqref{djj02856ssd}, and we now prove
the second. For this, using~\eqref{sti:fla}, and then Lemma~\ref{TRACEIN1}
(with~$A:=1$, $r:=R$ and~$\gamma:=1-\delta$)
we find that
\begin{eqnarray*}
&&\left|\int_{\partial\Omega\cap B_R}
f_\lambda(x,u_\lambda(x))\,u_\lambda(x)\,d{\mathcal{H}}^{n-1}_x
\right|\le
C\,\lambda^\delta\,
\int_{\partial\Omega\cap B_R} |x|^{\delta-1}\,
u^2_\lambda( x)\,d{\mathcal{H}}^{n-1}_x\\
&&\qquad\le C_R\,\lambda^\delta\,
\int_{\Omega\cap B_R}\left(|\nabla u_\lambda(x)|^2
+\frac{u^2_\lambda(x)}{|x|^{2-\delta}}\right)\,dx.
\end{eqnarray*}
Hence, using Corollary \ref{cor:dis} as before, we obtain that
$$ \left|\int_{\partial\Omega\cap B_R}
f_\lambda(x,u_\lambda(x))\,u_\lambda(x)\,d{\mathcal{H}}^{n-1}_x
\right|\le C_R\,\lambda^\delta,
$$
which implies the second claim in~\eqref{djj02856ssd}.
This completes the proof of~\eqref{djj02856ssd}.

Now we claim that
\begin{equation}\label{eqj-tilsa}
\begin{cases}
\Delta \tilde u=0 & {\mbox{ in }}\Omega,\\
\displaystyle\frac{\partial\tilde u}{\partial\nu}=0& {\mbox{ on }}\partial\Omega.
\end{cases}
\end{equation}
To this end, we exploit~\eqref{Eq:la:1} and~\eqref{Eq:la:2} and, given~$
\varphi\in  C^\infty_0(\R^n)$, we write that
\begin{eqnarray*}
0&=& \int_{\Omega} {\rm div}\,\Big(
A_\lambda(x)\,\nabla u_\lambda(x)\Big)\varphi(x)\,dx-
\int_{\Omega} g_\lambda(x,u_\lambda(x))\,\varphi(x)\,dx\\
&=& \int_{\partial\Omega} 
A_\lambda(x)\,\varphi(x)\nabla u_\lambda(x)\cdot \nu(x)\,d{\mathcal{H}}^{n-1}_x
\\&&\qquad - \int_{\Omega} 
A_\lambda(x)\,\nabla u_\lambda(x)\cdot\nabla\varphi(x)\,dx
-\int_{\Omega} g_\lambda(x,u_\lambda(x))\,\varphi(x)\,dx\\
&=& \int_{\partial\Omega} f_\lambda(x,u_\lambda(x))\,
\varphi(x)\,d{\mathcal{H}}^{n-1}_x
\\&&\qquad - \int_{\Omega} 
A_\lambda(x)\,\nabla u_\lambda(x)\cdot\nabla\varphi(x)\,dx
-\int_{\Omega} g_\lambda(x,u_\lambda(x))\,\varphi(x)\,dx.
\end{eqnarray*}
Hence, in light of~\eqref{ACONT}, \eqref{sti:gla} and~\eqref{sti:fla},
\begin{eqnarray*}
&& \left|\int_{\Omega} 
\nabla \tilde u(x)\cdot\nabla\varphi(x)\,dx\right|=\lim_{\lambda\searrow0}\left|
\int_{\Omega} 
A_\lambda(x)\,\nabla u_\lambda(x)\cdot\nabla\varphi(x)\,dx\right|\\
&&\qquad=\lim_{\lambda\searrow0}\left|
\int_{\partial\Omega} f_\lambda(x,u_\lambda(x))\,
\varphi(x)\,d{\mathcal{H}}^{n-1}_x
-\int_{\Omega} g_\lambda(x,u_\lambda(x))\,\varphi(x)\,dx\right|\\&&\qquad
\le C\,\lim_{\lambda\searrow0}\lambda^\delta\left(
\int_{\partial\Omega} |x|^{\delta-1}\,|u_\lambda(x)|\,|
\varphi(x)|\,d{\mathcal{H}}^{n-1}_x
+
\int_{\Omega} |x|^{\delta-2}\,|u_\lambda(x)|\,|\varphi(x)|\,dx
\right)\\&&\qquad
\le C\,\lim_{\lambda\searrow0}\lambda^\delta\left(
\int_{\partial\Omega\cap B_R} |x|^{\delta-1}\,|u_\lambda(x)|^2\,d{\mathcal{H}}^{n-1}_x+
\int_{\partial\Omega \cap B_R} |x|^{\delta-1}\,|
\varphi(x)|^2\,d{\mathcal{H}}^{n-1}_x
\right.\\
&&\qquad\left.\qquad\qquad
+
\int_{\Omega \cap B_R} |x|^{\delta-2}\,|u_\lambda(x)|^2\,dx+
\int_{\Omega\cap B_R} |x|^{\delta-2}\,|\varphi(x)|^2\,dx
\right)\\&&\qquad\le C'\,\lim_{\lambda\searrow0}\lambda^\delta\left(1+
\int_{\partial\Omega\cap B_R} |x|^{\delta-1}\,|u_\lambda(x)|^2\,d{\mathcal{H}}^{n-1}_x+
\int_{\Omega\cap B_R} |x|^{\delta-2}\,|u_\lambda(x)|^2\,dx
\right),
\end{eqnarray*}
where~$C'$, $R>0$ may also depend on~$\varphi$.
Consequently, using
Corollary \ref{cor:dis} and Lemma~\ref{TRACEIN1} as before, we obtain
$$ \left|\int_{\Omega} 
\nabla \tilde u(x)\cdot\nabla\varphi(x)\,dx\right|\le
C'\,\lim_{\lambda\searrow0}\lambda^\delta,$$
up to renaming~$C'>0$, that is
$$ \int_{\Omega} 
\nabla \tilde u(x)\cdot\nabla\varphi(x)\,dx=0.$$
Since this identity holds true for all~$\varphi\in C^\infty_0(\R^n)$,
we have completed the proof of~\eqref{eqj-tilsa}.

We now show that
\begin{equation}\label{H2bound}
{\mbox{$u_\lambda$ converges strongly to $\tilde u$ in~$H^1(\Omega\cap B_1)$,
as $\lambda\searrow0$.}}
\end{equation}
for some~$C>0$. To accomplish this, we will exploit elliptic regularity theory,
see e.g. Theorem~8.13 in~\cite{MR2399851}
(with the notation in Example 6.2 on page~314 in~\cite{MR2399851}
for the definition of the norms) or~\cite{MR0125307, MR0162050} and Theorem~5.1
in~\cite{MR0350177},
considering a set~$\Omega_1$ with smooth boundary
and such that~$\Sigma\subset \Omega_1\subset \Omega\cap (B_2\setminus B_{1/2})$.
In this way, by~\eqref{Eq:la:1} and~\eqref{Eq:la:2},
\begin{equation}\label{SA}
\| u_\lambda\|_{H^2(\Omega_1)}\le C\,\Big( 1+
\|u_\lambda\|_{L^2(\Omega_1)}+
\|g_\lambda(\cdot,u_\lambda)\|_{L^2(\Omega\cap(B_2\setminus B_{1/2}))}+
\|f_\lambda(\cdot,u_\lambda)\|_{H^{1/2}((\partial\Omega)\cap(B_2\setminus B_{1/2}))}
\Big).
\end{equation}
Moreover, in light of~\eqref{8uhscUSU} and~\eqref{sti:gla},
\begin{equation}\label{8wuief65748399312iofv}
\begin{split}
\|g_\lambda(\cdot,u_\lambda)\|_{L^2(\Omega\cap(B_2\setminus B_{1/2}))}^2\,&=
\int_{ \Omega\cap(B_2\setminus B_{1/2})} | g_\lambda (x,u_\lambda (x))|^2\,dx\\
&\le C\,\lambda^{2\delta}\,\int_{ \Omega\cap(B_2\setminus B_{1/2})}|x|^{2(\delta-2)}\,
|u_\lambda( x)|^2\,dx\\
&\le C\,\lambda^{2\delta}\,\int_{ \Omega\cap B_2 }
|u_\lambda( x)|^2\,dx
\\&\le C\,\lambda^{2\delta}.
\end{split}
\end{equation}
Similarly, recalling~\eqref{sti:fla} and \eqref{8uhscUSU},
\begin{eqnarray*}
\|f_\lambda(\cdot,u_\lambda)\|^2_{L^2(\Omega\cap(B_2\setminus B_{1/2}))}&=&
\int_{\Omega\cap(B_2\setminus B_{1/2})}
|f_\lambda(x,u_\lambda(x))|^2\,d x\\
&\le& C\,\lambda^{2\delta}\,
\int_{\Omega\cap(B_2\setminus B_{1/2})} |x|^{2(\delta-1)}
|u_\lambda(x)|^2\,d x\\
&\le& C\,\lambda^{2\delta}\,
\int_{\Omega\cap B_2}
 |u_\lambda(x)|^2\,dx\\
&\le& C\,\lambda^{2\delta}.
\end{eqnarray*}
Furthermore, from \eqref{STR:HY2DER}, \eqref{eq:f_t}, and \eqref{H1bound} it follows that
\begin{align*}
 & \|\nabla(f_\lambda(\cdot,u_\lambda))\|^2_{L^2(\Omega\cap(B_2\setminus
    B_{1/2}))}\\
&=\int_{\Omega\cap(B_2\setminus B_{1/2})}
\left|
\frac{\lambda}{\sqrt{H(\lambda)}}\left(\lambda\nabla_xf\left(\lambda
  x,\sqrt{H(\lambda)} u_\lambda(x)\right)+f_t\left(\lambda
  x,\sqrt{H(\lambda)} u_\lambda(x)\right) \sqrt{H(\lambda)}\nabla
  u_\lambda(x)\right)\right|^2dx\\
&\leq C \lambda^{2\delta}\int_{\Omega\cap
  B_2}(|u_\lambda(x)|^2+|\nabla u_\lambda(x)|^2)\,dx\\
&\leq C \lambda^{2\delta}
\end{align*}
Therefore
\[
\|f_\lambda(\cdot,u_\lambda)\|_{H^1(\Omega\cap(B_2\setminus B_{1/2}))}
\leq C\lambda^\delta
\]
which, in view of  the continuous trace embedding $H^1(\Omega\cap(B_2\setminus
B_{1/2}))\hookrightarrow H^{1/2}(\partial\Omega\cap(B_2\setminus
B_{1/2}))$,  yields 
\[
\|f_\lambda(\cdot,u_\lambda)\|_{H^{1/2}(\partial \Omega\cap(B_2\setminus B_{1/2}))}
\leq C\lambda^\delta
\]
up to renaming~$C$.
{F}rom this, \eqref{8uhscUSU},
\eqref{8wuief65748399312iofv} and~\eqref{SA}, we conclude that
$$ \| u_\lambda\|_{H^2(\Omega_1)}\le C,$$
again up to renaming~$C>0$. 
Thus, using the trace embedding,
$$ \| u_\lambda\|_{H^{3/2}(\Sigma)}\le C,$$
up to renaming~$C>0$, and consequently, up to a subsequence,
we obtain that
\begin{equation}\label{h1ka}
{\mbox{$u_\lambda$ converges to~$\tilde u$ in~$H^1(\Sigma)$.}}
\end{equation}
Now we notice that, exploiting~\eqref{Eq:la:1} and~\eqref{Eq:la:2},
\begin{eqnarray*}
0&=& \int_{\Omega\cap B_1} {\rm div}\,\Big(
A_\lambda(x)\,\nabla u_\lambda(x)\Big)u_\lambda(x)\,dx-
\int_{\Omega\cap B_1} g_\lambda(x,u_\lambda(x))\,u_\lambda(x)\,dx\\
&=& \int_{\partial(\Omega\cap B_1)} 
A_\lambda(x)\,u_\lambda(x)\nabla u_\lambda(x)\cdot \nu(x)\,d{\mathcal{H}}^{n-1}_x
\\&&\qquad - \int_{\Omega\cap B_1} 
A_\lambda(x)\,|\nabla u_\lambda(x)|^2\,dx
-\int_{\Omega\cap B_1} g_\lambda(x,u_\lambda(x))\,u_\lambda(x)\,dx\\
&=& \int_{\Sigma} 
A_\lambda(x)\,u_\lambda(x)\nabla u_\lambda(x)\cdot
\nu(x)\,d{\mathcal{H}}^{n-1}_x
+\int_{\partial\Omega\cap B_1} f_\lambda(x,u_\lambda(x))\,
u_\lambda(x)\,d{\mathcal{H}}^{n-1}_x
\\&&\qquad - \int_{\Omega\cap B_1} 
A_\lambda(x)\,|\nabla u_\lambda(x)|^2\,dx
-\int_{\Omega\cap B_1} g_\lambda(x,u_\lambda(x))\,u_\lambda(x)\,dx.
\end{eqnarray*}
Using this, \eqref{ACONT}, \eqref{djj02856ssd}
and~\eqref{h1ka}, we conclude that
\begin{eqnarray*}&&
\lim_{\lambda\searrow0}\int_{\Omega\cap B_1} |\nabla u_\lambda(x)|^2\,dx\\
&=&
\lim_{\lambda\searrow0}\int_{\Omega\cap B_1} 
A_\lambda(x)\,|\nabla u_\lambda(x)|^2\,dx\\
&=&
\lim_{\lambda\searrow0}
\int_{\Sigma} 
A_\lambda(x)\,u_\lambda(x)\nabla u_\lambda(x)\cdot
\nu(x)\,d{\mathcal{H}}^{n-1}_x
+\int_{\partial\Omega\cap B_1} f_\lambda(x,u_\lambda(x))\,
u_\lambda(x)\,d{\mathcal{H}}^{n-1}_x
\\&&\qquad 
-\int_{\Omega\cap B_1} g_\lambda(x,u_\lambda(x))\,u_\lambda(x)\,dx\\
&=&
\lim_{\lambda\searrow0}\int_{\Sigma} 
A_\lambda(x)\,u_\lambda(x)\nabla u_\lambda(x)\cdot
\nu(x)\,d{\mathcal{H}}^{n-1}_x\\&=&
\int_{\Sigma} 
\tilde u(x)\nabla \tilde u(x)\cdot
\nu(x)\,d{\mathcal{H}}^{n-1}_x.
\end{eqnarray*}
Hence, recalling~\eqref{eqj-tilsa},
\begin{eqnarray*}
\lim_{\lambda\searrow0}\int_{\Omega\cap B_1} |\nabla u_\lambda(x)|^2\,dx
&=&
\int_{\partial(\Omega\cap B_1)} 
\tilde u(x)\nabla \tilde u(x)\cdot
\nu(x)\,d{\mathcal{H}}^{n-1}_x\\
&=&
\int_{\Omega\cap B_1} {\rm div}\,\Big(
\tilde u(x)\nabla \tilde u(x)
\Big)\,dx\\&=&
\int_{\Omega\cap B_1} |\nabla \tilde u(x)|^2\,dx.
\end{eqnarray*}
Since the weak convergence and the convergence of the norm
imply the strong convergence in~$L^2(\Omega\cap B_1)$, we thereby
conclude that~$\nabla u_\lambda$ converges to~$\nabla\tilde u$ strongly
in~$L^2(\Omega\cap B_1,\R^n)$, and this gives~\eqref{H2bound},
as desired.

{F}rom~\eqref{H2bound} and \eqref{djj02856ssd},
 recalling~\eqref{eqj-tilsa} and the notation in~\eqref{notazz},
we conclude that
$$ \lim_{\lambda\searrow0} {\mathcal{N}}_\lambda(r)=
{\mathcal{N}}_{\tilde u,1,0,0}(r).$$
As a consequence, exploiting~\eqref{78dh2eiyqwhdhffggfg},
\begin{equation}\label{NMa-10} {\mathcal{N}}_{\tilde u,1,0,0}(r)=\gamma.\end{equation}
{F}rom this, we conclude that
\begin{equation}\label{HO1}
{\mbox{$\tilde u$ is positively homogeneous
of degree~$\gamma$,}}\end{equation} and hence we can write~$\tilde u$
as in~\eqref{tildeupsi}.

For completeness, we give a self-contained proof of~\eqref{HO1}
by arguing as follows. 
By~\eqref{NMa-10}, we know that~${\mathcal{N}}_{\tilde u,1,0,0}'(r)=0$,
and therefore, by~\eqref{DeH DEN},
$$ D'_{\tilde u,1,0,0}(r)\,H_{\tilde u,1}(r)-H'_{\tilde u,1}(r)
D_{\tilde u,1,0,0}(r)=0\quad\text{for all }r>0.$$
Hence, exploiting~\eqref{PLUAQ} in this setting, and recalling~\eqref{0102303},
we see that
\begin{eqnarray*}
0&=& r^{-n} H_{\tilde u,1}(r)\;
\int_{\partial B_r\cap\Omega}
(\nabla \tilde u\cdot x)^2
-r^{3-2n}\left(\int_{\partial B_r\cap\Omega}
\tilde u\nabla \tilde u\cdot\nu\right)^2 \\
&=&
r^{3-2n}\left[\int_{\partial B_r\cap\Omega} \tilde u^2
\int_{\partial B_r\cap\Omega}
(\nabla \tilde u\cdot \nu)^2
-\left(\int_{\partial B_r\cap\Omega}
\tilde u\nabla \tilde u\cdot\nu\right)^2\right]\quad\text{for all }r>0.
\end{eqnarray*}
By the Cauchy-Schwarz Inequality, the latter term is nonnegative,
and consequently we find that~$\tilde u$ is proportional to~$\nabla \tilde u\cdot \nu$.
Accordingly, we have that~$\tilde u$ is a positively
homogeneous function, of some degree~$\gamma'$.

Then, using~\eqref{NMa-10} once again
\begin{equation}\label{860329eufj0987kap}
\begin{split}
& \gamma\, \int_{\partial B_1\cap\Omega} \tilde u(x)\,\frac{\partial\tilde u}{\partial\nu}(x)
\,d{\mathcal{H}}^{n-1}_x
= \gamma\, {\gamma'}\,\int_{\partial B_1\cap\Omega} \tilde u^2(x)
\,d{\mathcal{H}}^{n-1}_x=
\gamma \,{\gamma'}\,H_{\tilde u,1}(1)\\&\qquad=\gamma'D_{\tilde u,1,0,0}(1)
= {\gamma'}\,\int_{B_1\cap\Omega} |\nabla \tilde u(x)|^2\,dx.
\end{split}\end{equation}
On the other hand, by~\eqref{eqj-tilsa},
$$ 
\int_{\partial B_1\cap\Omega}
\tilde u(x)\,\frac{\partial\tilde u}{\partial\nu}(x)
\,d{\mathcal{H}}^{n-1}_x=
\int_{\partial(B_1\cap\Omega)}
\tilde u(x)\,\frac{\partial\tilde u}{\partial\nu}(x)
\,d{\mathcal{H}}^{n-1}_x=
\int_{B_1\cap\Omega} |\nabla \tilde u(x)|^2\,dx
.$$
Plugging this information into~\eqref{860329eufj0987kap}, we thereby conclude that
$$ \gamma\, \int_{B_1\cap\Omega} |\nabla \tilde u(x)|^2\,dx
= {\gamma'}\,\int_{B_1\cap\Omega} |\nabla \tilde u(x)|^2\,dx,$$
and then~$\gamma'=\gamma$.
This completes the proof of~\eqref{HO1} (and thus of~\eqref{tildeupsi}).

We also remark that, by~\eqref{tildeupsi} and~\eqref{eqj-tilsa},
using the notation~$\rho:=|x|$ and~$\vartheta:=x/|x|$,
$$ 0=\Delta \tilde u(x)= \gamma(\gamma-1)\rho^{\gamma-2}\psi(\vartheta)
+(n-1)\gamma\rho^{\gamma-2}\psi(\vartheta)+\rho^{\gamma-2}\Delta_{S^{n-1}}\psi(\vartheta),
$$
and therefore~$\psi$ is an eigenfunction of te operator $\mathcal
L_\Sigma$; the Neumann boundary condition of~$\psi$ also follows from
the one of~$\tilde u$ in~\eqref{eqj-tilsa}.

Furthermore, by \eqref{eq:tilde-u-NB} and \eqref{tildeupsi} 
\begin{eqnarray*}
1&=&\int_{\partial B_1\cap\Omega}
\tilde u^2(x)\,d{\mathcal{H}}^{n-1}_x\\
&=& \int_{\partial B_1\cap\Omega} |x|^{2\gamma}
\psi^2\left(\frac{x}{|x|}\right)\,d{\mathcal{H}}^{n-1}_x\\&=&
\int_{\partial B_1\cap\Omega}
\psi^2(x)\,d{\mathcal{H}}^{n-1}_x,
\end{eqnarray*}
which gives~\eqref{NORMALIZ}. The proof of Theorem~\ref{BLOW-a}
is thereby complete.

\section{Proof of Theorem~\ref{UnA-ass}}\label{oedkcpoetrtegfi}

First, we prove~\eqref{UQ:3X}. We argue by contradiction,
supposing that~\eqref{UQ:3X} does not hold, and therefore~\eqref{NONTRIVIAL}
is satisfied. Hence, we are in the position of using Theorem~\ref{BLOW-a},
and we let~$\tilde u$ and~$\psi$ as in~\eqref{tildeupsi}.  We note
that, by \eqref{eqj-tilsa} and elliptic regularity theory, we have
that $\tilde u$ is smooth on $\overline\Omega\setminus\{0\}$.

We observe that the trace of $\tilde u$ on $B_1\cap \partial\Omega$
(which belongs to $L^2(B_1\cap \partial\Omega)$ by trace embeddings)
cannot vanish identically, i.e. 
\begin{equation}\label{eq:nottriv}
\tilde u\not\equiv0\quad\text{on
$B_1\cap \partial\Omega$},
\end{equation}
 otherwise $\tilde u$ would be
a harmonic function with homogeneous Dirichlet and Neumann conditions
on $B_1\cap \partial\Omega$,
and then necessarily~$\tilde u$ would vanish identically in $B_1\cap
\Omega$ (otherwise its trivial extension would violate classical
unique continuation principles),
in contradiction with~\eqref{NORMALIZ}.


From assumption \eqref{UQ:2X} it follows that, for all $k\in\N$ 
\begin{equation}\label{eq:6}
\lambda^{-k}u(\lambda\cdot)\to 0\quad\text{in
}L^2(B_1\cap \partial\Omega).
\end{equation}
Since, in view of \eqref{ulam},
\begin{align*}
  \frac{\sqrt{H(\lambda})}{\lambda^k}=\frac{\|\lambda^{-k}u(\lambda\cdot)\|_{L^2(B_1\cap \partial\Omega)}}{\|u_\lambda\|_{L^2(B_1\cap \partial\Omega)}}
\end{align*}
and, by Theorem \ref{BLOW-a}, $u_\lambda \to \tilde u$ in
$L^2(B_1\cap \partial\Omega)$ along a subsequence, from
\eqref{eq:nottriv} and \eqref{eq:6} we conclude that
\begin{equation*}
\lim_{\lambda\searrow0}\frac{\sqrt{H(\lambda)}}{\lambda^k}=0,
\end{equation*}
for all~$k\in\N$.
Consequently, for all~$k\in\N$, there exists~$\lambda_0(k)\in(0,r_0/2)$
such that, for all~$\lambda\in(0,\lambda_0(k)]$,
\begin{equation}\label{8ywishcjx bc0293ryeu}
\frac{\sqrt{H(\lambda)}}{\lambda^k}\le1.
\end{equation}
On the other hand, by~\eqref{DOU}, 
$$ H(2^m \lambda)\le C^m H(\lambda)$$
for all~$m\in\N$ and $\lambda\in (0, 2^{-m}r_0)$, for a suitable~$C>0$
independent of $\lambda$ and $m$. 
This and~\eqref{8ywishcjx bc0293ryeu} give that,
for all~$k$, $m\in\N$ and
for all~$\lambda\in(0,\min\{\lambda_0(k),2^{-m}r_0\})$,
\begin{equation*}
H(2^m \lambda)\le C^m\lambda^{2k}.
\end{equation*}
As a consequence, recalling~\eqref{DeH DEF} and integrating,
\begin{eqnarray*}&&
\frac{1}{2^{m}}
\int_{ B_{2^m\lambda}\cap\Omega} A(x)\,u^2(x)
\,dx
=
\frac{1}{2^{m}}
\int_0^{2^m\lambda}\left[
\int_{\partial B_{\rho}\cap\Omega} A(x)\,u^2(x)
\,d{\mathcal{H}}^{n-1}_x\right]\,d\rho\\&&\qquad=
\int_0^{\lambda}\left[
\int_{\partial B_{2^m r}\cap\Omega} A(x)\,u^2(x)
\,d{\mathcal{H}}^{n-1}_x\right]\,dr
=\int_0^\lambda 
(2^m r)^{n-1}
H(2^m r)\,dr\\&&\qquad
\le 2^{m(n-1)}C^m \int_0^\lambda r^{n-1+2k}\,dr
=\frac{2^{m(n-1)}C^m \,\lambda^{n+2k}}{n+2k},\end{eqnarray*}
for all~$k$, $m\in\N$ and
for all~$\lambda\in(0,\min\{\lambda_0(k),2^{-m}r_0\})$.

We  choose $m_\lambda\in\N$
such that
\begin{equation}\label{7-394} 
\left|\log_2\bigg(\frac{2\lambda}{r_0}\bigg)\right|\le m_\lambda<
 1+\left|\log_2\bigg(\frac{2\lambda}{r_0}\bigg)\right|,\end{equation}
so that $\lambda< 2^{-m_\lambda}r_0$ for all $\lambda<\frac{r_0}2$.
Then
we find that
$$ 
\int_{ B_{2^m\lambda}\cap\Omega} A(x)\,u^2(x)
\,dx\le \frac{2^{m_\lambda n}\;C^{m_\lambda} \,\lambda^{n+2k}}{n+2k},$$
for all~$k\in\N$ and
for all~$\lambda\in(0,\lambda_0(k)]$.

Hence, since, by~\eqref{7-394}, we know that~$2^{m_\lambda}\lambda\in\left[\frac{r_0}2,r_0\right]$,
$$ 
\int_{ B_{\frac{r_0}{2}}\cap\Omega} A(x)\,u^2(x)
\,dx\le \frac{(2^{n}\,C)^{1+|\log_2\frac{2\lambda}{r_0}|} \,\lambda^{n+2k}}{n+2k}\le
\frac{(2^{n}\,C)^{-2 \log_2\frac{2\lambda}{r_0}} \,\lambda^{n+2k}}{n+2k}=\kappa\frac{\lambda^{n+2k-\theta}}{n+2k},
$$
for some suitable~$\theta,\kappa>0$ depending only on $n,C,r_0$ (but
independent of $k$),
for all~$k\in\N$ and
for all~$\lambda\in(0,\min\{\lambda_0(k),r_0/4\})$.

Accordingly, choosing~$k\in\N$ sufficiently large
such that~$n+2k-\theta>0$ and sending~$\lambda\searrow0$, we conclude that
$$ \int_{B_{\frac{r_0}{2}}\cap\Omega} A(x)\,u^2(x)
\,dx=0.$$
This gives that~\eqref{UQ:3X} holds true, in contradiction with
our initial hypothesis.

This completes the proof of~\eqref{UQ:3X}.
Finally, the proof of~\eqref{UQ:4X}
is identical to the proof of~\eqref{UQ:4}, hence the proof of
Theorem~\ref{UnA-ass} is complete.

\begin{bibdiv}
\begin{biblist}

\bib{MR1466583}{article}{
    AUTHOR = {Adolfsson, Vilhelm}
    AUTHOR = {Escauriaza, Luis}
     TITLE = {{$C^{1,\alpha}$} domains and unique continuation at the
              boundary},
   JOURNAL = {Comm. Pure Appl. Math.},
  FJOURNAL = {Communications on Pure and Applied Mathematics},
    VOLUME = {50},
      YEAR = {1997},
    NUMBER = {10},
     PAGES = {935--969},
      ISSN = {0010-3640},
      DOI = {10.1002/(SICI)1097-0312(199710)50:10$<$935::AID-CPA1$>$3.0.CO;2-H},
 review={\MR{MR1466583}},}

\bib{MR1363203}{article}{
    AUTHOR = {Adolfsson, Vilhelm}
AUTHOR = {Escauriaza, Luis}
AUTHOR = {Kenig, Carlos},
     TITLE = {Convex domains and unique continuation at the boundary},
   JOURNAL = {Rev. Mat. Iberoamericana},
  FJOURNAL = {Revista Matem\'{a}tica Iberoamericana},
    VOLUME = {11},
      YEAR = {1995},
    NUMBER = {3},
     PAGES = {513--525},
      ISSN = {0213-2230},
   MRCLASS = {31B05 (35J05 35K05)},
  MRNUMBER = {1363203},
MRREVIEWER = {A. Yu. Rashkovski\u{\i}},
       DOI = {10.4171/RMI/182},
       URL = {https://doi.org/10.4171/RMI/182},
review={\MR{MR1363203}},}

\bib{MR0125307}{article}{
   author={Agmon, S.},
   author={Douglis, A.},
   author={Nirenberg, L.},
   title={Estimates near the boundary for solutions of elliptic partial
   differential equations satisfying general boundary conditions. I},
   journal={Comm. Pure Appl. Math.},
   volume={12},
   date={1959},
   pages={623--727},
   issn={0010-3640},
   review={\MR{0125307}},
   doi={10.1002/cpa.3160120405},
}

\bib{MR0162050}{article}{
   author={Agmon, S.},
   author={Douglis, A.},
   author={Nirenberg, L.},
   title={Estimates near the boundary for solutions of elliptic partial
   differential equations satisfying general boundary conditions. II},
   journal={Comm. Pure Appl. Math.},
   volume={17},
   date={1964},
   pages={35--92},
   issn={0010-3640},
   review={\MR{0162050}},
   doi={10.1002/cpa.3160170104},
}

\bib{MR574247}{article}{
   author={Almgren, Frederick J., Jr.},
   title={Dirichlet's problem for multiple valued functions and the
   regularity of mass minimizing integral currents},
   conference={
      title={Minimal submanifolds and geodesics},
      address={Proc. Japan-United States Sem., Tokyo},
      date={1977},
   },
   book={
      publisher={North-Holland, Amsterdam-New York},
   },
   date={1979},
   pages={1--6},
   review={\MR{574247}},
}



\bib{MR3169789}{article}{
    AUTHOR = {Fall, Mouhamed Moustapha}
AUTHOR = {Felli, Veronica},
     TITLE = {Unique continuation property and local asymptotics of
              solutions to fractional elliptic equations},
   JOURNAL = {Comm. Partial Differential Equations},
  FJOURNAL = {Communications in Partial Differential Equations},
    VOLUME = {39},
      YEAR = {2014},
    NUMBER = {2},
     PAGES = {354--397},
      ISSN = {0360-5302},
   MRCLASS = {35R11 (35B60 35J61 35J75)},
  MRNUMBER = {3169789},
MRREVIEWER = {Xavier Ros-Oton},
       DOI = {10.1080/03605302.2013.825918},
       URL = {https://doi.org/10.1080/03605302.2013.825918},
review={\MR{MR3169789}},
}

\bib{MR2735078}{article}{
    AUTHOR = {Felli, Veronica}
  AUTHOR = {Ferrero, Alberto}
 AUTHOR = {Terracini, Susanna},
     TITLE = {Asymptotic behavior of solutions to {S}chr\"{o}dinger equations
              near an isolated singularity of the electromagnetic potential},
   JOURNAL = {J. Eur. Math. Soc. (JEMS)},
  FJOURNAL = {Journal of the European Mathematical Society (JEMS)},
    VOLUME = {13},
      YEAR = {2011},
    NUMBER = {1},
     PAGES = {119--174},
      ISSN = {1435-9855},
   MRCLASS = {35Q55 (35B65 35Q41 78M35 81Q05)},
  MRNUMBER = {2735078},
       DOI = {10.4171/JEMS/246},
       URL = {https://doi.org/10.4171/JEMS/246},
review={\MR{MR2735078}},
}

\bib{MR3109767}{article}{
    AUTHOR = {Felli, Veronica}
     AUTHOR =  {Ferrero, Alberto},
     TITLE = {Almgren-type monotonicity methods for the classification of
              behaviour at corners of solutions to semilinear elliptic
              equations},
   JOURNAL = {Proc. Roy. Soc. Edinburgh Sect. A},
  FJOURNAL = {Proceedings of the Royal Society of Edinburgh. Section A.
              Mathematics},
    VOLUME = {143},
      YEAR = {2013},
    NUMBER = {5},
     PAGES = {957--1019},
      ISSN = {0308-2105},
   MRCLASS = {35J25 (35A20 35C20)},
  MRNUMBER = {3109767},
       DOI = {10.1017/S0308210511001314},
       URL = {https://doi.org/10.1017/S0308210511001314},
review={\MR{3109767}},
}

\bib{MR833393}{article}{
   author={Garofalo, Nicola},
   author={Lin, Fang-Hua},
   title={Monotonicity properties of variational integrals, $A_p$ weights
   and unique continuation},
   journal={Indiana Univ. Math. J.},
   volume={35},
   date={1986},
   number={2},
   pages={245--268},
   issn={0022-2518},
   review={\MR{833393}},
   doi={10.1512/iumj.1986.35.35015},
}

\bib{MR882069}{article}{
   author={Garofalo, Nicola},
   author={Lin, Fang-Hua},
   title={Unique continuation for elliptic operators: a
   geometric-variational approach},
   journal={Comm. Pure Appl. Math.},
   volume={40},
   date={1987},
   number={3},
   pages={347--366},
   issn={0010-3640},
   review={\MR{882069}},
   doi={10.1002/cpa.3160400305},
}

\bib{MR1415331}{article}{
    AUTHOR = {Kukavica, Igor}
AUTHOR = {Nystr\"{o}m, Kaj},
     TITLE = {Unique continuation on the boundary for {D}ini domains},
   JOURNAL = {Proc. Amer. Math. Soc.},
  FJOURNAL = {Proceedings of the American Mathematical Society},
    VOLUME = {126},
      YEAR = {1998},
    NUMBER = {2},
     PAGES = {441--446},
      ISSN = {0002-9939},
   MRCLASS = {31B20},
  MRNUMBER = {1415331},
MRREVIEWER = {A. Yu. Rashkovski\u{\i}},
       DOI = {10.1090/S0002-9939-98-04065-9},
       URL = {https://doi.org/10.1090/S0002-9939-98-04065-9},
review={\MR{MR1415331}}}

\bib{MR1233189}{article}{
    AUTHOR = {Kurata, Kazuhiro},
     TITLE = {A unique continuation theorem for uniformly elliptic equations
              with strongly singular potentials},
   JOURNAL = {Comm. Partial Differential Equations},
  FJOURNAL = {Communications in Partial Differential Equations},
    VOLUME = {18},
      YEAR = {1993},
    NUMBER = {7-8},
     PAGES = {1161--1189},
      ISSN = {0360-5302},
   MRCLASS = {35B60 (35J15)},
  MRNUMBER = {1233189},
       DOI = {10.1080/03605309308820968},
       URL = {https://doi.org/10.1080/03605309308820968},
review={\MR{MR1233189}}
}

\bib{MR0350177}{book}{
   author={Lions, J.-L.},
   author={Magenes, E.},
   title={Non-homogeneous boundary value problems and applications. Vol. I},
   note={Translated from the French by P. Kenneth;
   Die Grundlehren der mathematischen Wissenschaften, Band 181},
   publisher={Springer-Verlag, New York-Heidelberg},
   date={1972},
   pages={xvi+357},
   review={\MR{0350177}},
}

\bib{MR2399851}{book}{
   author={Salsa, Sandro},
   title={Partial differential equations in action},
   series={Universitext},
   note={From modelling to theory},
   publisher={Springer-Verlag Italia, Milan},
   date={2008},
   pages={xvi+556},
   isbn={978-88-470-0751-2},
   review={\MR{2399851}},
}

\bib{MR2162295}{article}{
   author={Tao, Xiangxing},
   author={Zhang, Songyan},
   title={Boundary unique continuation theorems under zero Neumann boundary
   conditions},
   journal={Bull. Austral. Math. Soc.},
   volume={72},
   date={2005},
   number={1},
   pages={67--85},
   issn={0004-9727},
   review={\MR{2162295}},
   doi={10.1017/S0004972700034882},
}

\bib{MR2370633}{article}{
    AUTHOR = {Tao, Xiangxing}
     AUTHOR = {Zhang, Songyan},
     TITLE = {Weighted doubling properties and unique continuation theorems
              for the degenerate {S}chr\"{o}dinger equations with singular
              potentials},
   JOURNAL = {J. Math. Anal. Appl.},
  FJOURNAL = {Journal of Mathematical Analysis and Applications},
    VOLUME = {339},
      YEAR = {2008},
    NUMBER = {1},
     PAGES = {70--84},
      ISSN = {0022-247X},
   MRCLASS = {35J10 (35B60 35J15)},
  MRNUMBER = {2370633},
MRREVIEWER = {Norbert Weck},
       DOI = {10.1016/j.jmaa.2007.06.042},
       URL = {https://doi.org/10.1016/j.jmaa.2007.06.042},
review={\MR{MR2370633}}}

\end{biblist} 
\end{bibdiv}

\vfill

{\footnotesize

\noindent {\em Addresses:} \\

\noindent {\sc
Serena Dipierro.}
Department of Mathematics
and Statistics,
University of Western Australia,
35 Stirling Hwy, Crawley WA 6009, Australia.\medskip

\noindent {\sc
Veronica Felli.}
Dipartimento di Scienza dei Materiali,
Universit\`a di Milano-Bicocca,
Via Cozzi 55, 20125 Milano, Italy.\medskip

\noindent
{\sc Enrico Valdinoci.}
Department of Mathematics
and Statistics,
University of Western Australia,
35 Stirling Hwy, Crawley WA 6009, Australia.\medskip
\medskip

\noindent
E-mail: {\tt serena.dipierro@uwa.edu.au, veronica.felli@unimib.it,
enrico.valdinoci@uwa.edu.au}

}
\end{document}